\title[Discrete CBO]{Discrete Consensus-Based Optimization}
\author[Byeon]{Junhyeok Byeon}
\address[Junhyeok Byeon]{\newline Research Institute of Basic Sciences\newline Seoul National University, Seoul 08826, Republic of Korea}
\email{giugi2486@snu.ac.kr}
\author[Ha]{Seung-Yeal Ha}
\address[Seung-Yeal Ha]{\newline Department of Mathematical Sciences\newline Seoul National University, Seoul 08826, Republic of Korea}
\email{syha@snu.ac.kr}
\author[Won]{Joong-Ho Won}
\address[Joong-Ho Won]{\newline Department of Statistics \newline Seoul National University, Seoul 08826, Republic of Korea}
\email{wonj@stats.snu.ac.kr}
\DeclareMathOperator*{\argmin}{arg\,min}
\DeclareMathOperator*{\essinf}{ess\,inf}
\newtheorem{theorem}{Theorem}[section]
\newtheorem{proposition}{Proposition}[section]
\newtheorem{remark}{Remark}[section]
\newtheorem{example}{Example}[section]
\newtheorem{definition}{Definition}[section]
\newcommand{\bbr}{\mathbb R}
\newcommand{\vast}{\bBigg@{4}}
\newcommand{\Vast}{\bBigg@{5}}
\begin{document}

\date{\today}

\subjclass[2020]{37H10, 37N40, 65K10} \keywords{Consensus-based optimization, global optimization, interacting particle systems, stochastic particle methods}

\thanks{\textbf{Acknowledgment.} {
J. Byeon was supported by the National Research Foundation of Korea grant funded by the Korean government (No.2019R1A6A1A10073437). S.-Y. Ha was partially supported by the National Research Foundation of Korea Grant (NRF-2020R1A2C3A01003881). J.-H. Won was supported by the Research Grant of Seoul National University.}
} 
\begin{abstract}
We propose Discrete Consensus-Based Optimization (DCBO), a fully discrete version of the Consensus-Based Optimization (CBO) framework. DCBO is a multi-agent method for the global optimization of possibly non-convex and non-differentiable functions. It aligns with the CBO paradigm, which promotes a consensus among agents towards a global optimum through simple stochastic dynamics amenable to rigorous mathematical analysis. Despite the promises, there has been a gap between the analysis of CBO and the actual behavior of the agents from its time-discrete implementation, as the former has focused on the system of continuous stochastic differential equations defining the model or its mean-field approximation. In particular, direct analysis of CBO-type algorithms with heterogeneous stochasticity is very challenging. DCBO distinguishes itself from these approaches in the sense that it has no continuous counterpart, thanks to the replacement of the ``softmin" operator with the ``hardmin" one, which is inherently discrete. Yet, it maintains the operational principles of CBO and allows for rich mathematical analysis. We present conditions, independent of the number of agents, for achieving a consensus or convergence and study the circumstances under which global optimization occurs. We test DCBO on a large number of benchmark functions to show its merits. We also demonstrate that DCBO is applicable to a diverse range of real-world problems, including neural network training, compressed sensing, and portfolio optimization, with competitive performance.
\end{abstract}

\maketitle \centerline{\date}

\section{Introduction}\label{sec:1}

We address the problem of global minimization of a real-valued objective function $f$, defined on a closed set $S \subset \mathbb{R}^d$:
\begin{equation}\label{A0}
	\min_{x \in S} f(x).
\end{equation}
We are interested in the case that $f$ is (Borel) measurable on $S$ and it can be evaluated at any point there. Importantly, we do not assume that $f$ is either convex or differentiable. We tackle problem \eqref{A0} by employing a large number of agents with random dynamics. Despite the random nature of the mechanism that includes intricate stochastic relations among the agents, we provide a rigorous analysis of the dynamics of the proposed method, including the proof of convergence to the global minimum. \newline
\indent Our method is motivated by the recent advances in \emph{consensus-based optimization} (CBO) \cite{CBO}, which belongs to a family of metaheuristic methods that prescribe instantaneous spatial dynamics among agents in terms of a system of stochastic differential equations (SDEs) to promote a consensus among the agents localized in the vicinity of a global minimizer. Compared to other metaheuristic methods \cite{DB,HM,KA,PSO}, the major advantage of CBO is its capacity enabling the construction of proofs of convergence to a global minimum in the absence of convexity or derivative information, largely due to a simpler mechanism. Notably, CBO methods are not merely derivative-free but also incorporate features that enhance their effectiveness. Totzeck et al. \cite{TW} utilize personal best information to enhance performance. Riedl \cite{R} incorporates memory effect and gradient information for refinement, and Carrillo et al. \cite{CJLZ} modifies the model to reduce its dependency on dimensionality, thereby adapting the CBO to applications in high-dimensional optimization problems, such as those encountered in machine learning tasks \cite{FHPS2}. Further, CBO variations on manifolds are also discussed in \cite{FHPS,FHPS3,HKKKY}.

\indent In general, CBO on $\bbr^d$ is represented as a system of SDEs of the following form:
\begin{align}\label{CBO}
    dX^i_t = \lambda F(\bar{X}_t - X^i_t) dt + \sigma G(\bar{X}_t - X^i_t) dW^i_t, \quad t \geq 0, \quad i \in \{1,\cdots,N\}=:[N],
\end{align}
where $X^i_t$ is the position of the $i$-th agent (particle) at time $t$, and $W^i_t$ is a $d$-dimensional Wiener process at time $t$ assigned to the $i$-th agent. Functions $F$ and $G$ are designed to be a zero vector and a zero matrix at the origin, respectively. Therefore, $\bar{X}_t = X^i_t$ implies $dX^i_t = 0$, so that each $X^i$ becomes stationary and terminates the updates when every agent arrives at a common point; this phenomenon is referred to as a \emph{consensus}. When the agents are close to the consensus state at time $T$, $\bar{X}_T$ is regarded as a candidate for a global minimizer. Motivated by the Laplace principle \cite{DZ}, the typical choice of consensus point $\bar{X}$ is by the action of the ``softmin" operator
\[
    \bar{X}_t = \frac{\sum_{k=1}^N X^k_t w_f^\beta(X^k_t)}{ \sum_{k=1}^N w_f^\beta(X^k_t)}, \quad w_f^\beta(X) := e^{-\beta f(X)}, \quad \beta > 0,
\]
which is a weighted average of particle's position process $X_t^1,\ldots,X_t^N$, where those that result in the smallest value of $f$ get the largest weight; as $\beta \to \infty$, $\bar{X}_t$ tends to the centroid of the particles that yields the smallest value of $f$.

\indent Due to the randomness induced by Wiener process, analysis of CBO is typically conducted by the corresponding mean-field approximation by limit $N \to \infty$. In this case, the SDE system \eqref{CBO} is approximated by the partial differential equation (PDE) corresponding to the McKean process, and the randomness embedded in SDE \eqref{CBO} converges to a deterministic diffusion form \cite{CCTT,FKR,GHV}. However, how far the PDE model can explain the actual operation of the implemented CBO (i.e., discretized SDE) remains theoretically subtle (see Section \ref{sec:2}). In this context, the actual operation of CBO has been studied indirectly at the PDE level and remains largely unsolved.

\indent The goal of this paper is to bridge this gap by introducing the \emph{Discrete CBO} (DCBO) model. Let $x^i_n$ be the (position of the) $i$-th agent at discretized time $n$. DCBO is described by following recurrence relations with randomness:
\begin{subequations}\label{update}
    \begin{align}
        &x^i_{n+1} = x^i_n + \gamma^1(p_n - x^i_n) + \gamma^2(p_n - x^i_n) \odot \eta_n^i, \label{DCBOa}\\
        \text{or} \quad &x^i_{n+1} = x^i_n + \bar{\gamma}^1(p_n - x^i_n) + \bar{\gamma}^2\|p_n - x^i_n\|\frac{\eta_n^i}{\sqrt{d}}, \quad n \in \mathbb{N} \cup \{0\}. \label{DCBOb}
    \end{align}
\end{subequations}
The third terms on the R.H.S. of \eqref{DCBOa} and \eqref{DCBOb} are referred to as anisotropic \cite{CJLZ} and isotropic diffusion \cite{CCTT,FKR,CBO}, respectively. Model \eqref{update} considers a mixture of two diffusions. More precisely, one of the two update rules in \eqref{update} is assigned to all agents a priori. In particular, the particle dynamics can be either fully anisotropic or isotropic, meaning that every agent can update via either $\eqref{DCBOa}$ or $\eqref{DCBOb}$. In \eqref{update}, $d$ represents the dimension of the ambient space, $\odot$ denotes the Hadamard product (i.e., entry-wise product), $\| \cdot \|$ denotes the Euclidean norm on $\mathbb{R}^d$, and $p_n$ is one of the $x^i_{n}$'s with the smallest function value. If there are multiple agents with the smallest function value, we take the one with the smallest index. Each $\eta^i_n$ is a $d$-dimensional vector of i.i.d. standard normal random variables. The coefficients $\gamma^1 > 0$ and $\gamma^2 \geq 0$ control the deterministic and stochastic behaviors of the agents, respectively. The core idea of DCBO is to replace the softmin operator with the ``hardmin operator'' by letting $\beta \to \infty$:
\begin{align}\label{lim}
	\bar{x}_n = \frac{\sum_{k=1}^N x^k_n e^{-\beta f(x^k_n)}}{ \sum_{k=1}^N e^{-\beta f(x^k_n)}} 
	~ \xrightarrow{\beta \to \infty} ~
	\argmin_{i \in \{1,\ldots,N\}}f(x_n^i) = p_n.
\end{align}
Thus, relations \eqref{update} can be informally understood as modeling the CBO under $\beta = \infty$, anticipating enhanced performance since larger $\beta$ values are known to improve CBO-type models \cite{CJLZ, CBO, TW}. However, mathematical analysis in this scenario has been studied in a very limited scope, due to the discontinuity of $(x^1_t,\ldots,x^N_t) \mapsto p_t$ for a continuous-time $t$. Also note that \eqref{lim} holds true when all $f(x^i_n)$ values are distinct. As a result, hardmin-based models \emph{cannot} be derived from the time-discretization of continuous models where mathematical analysis is feasible (see Section \ref{sec:2} for details). Thus, DCBO is inherently discrete and not a \emph{discretized} version of CBO. Despite this distinction, DCBO phenomenologically mirrors CBO as it shares the form \eqref{CBO} and is approximated from the CBO model by \eqref{lim}.\newline
\indent  CBO-type algorithms typically terminate on an approximate consensus, i.e., $\|x^i_n-x^j_n\|<\varepsilon \ll 1$ for any $i$ and $j$. A consensus is a prerequisite for the convergence of the stochastic process. The precise definition is follows. 

\begin{definition}[Consensus and convergence in DCBO]\label{D3.1}
	Let $(x_n^i)_{i \in [N], ~ n \geq 0}$ be a solution to the discrete stochastic process \eqref{update} with an underlying probability space $(\Omega,\mathcal{F},\mathbb{P})$. Let $X_n(\omega) = (x_n^1(\omega),\ldots,x_n^N(\omega))$ and $\omega \in \Omega$ be a sample point.
	\begin{enumerate}
		\item We say that a sample path $(X_n(\omega))$ reaches a \emph{consensus} if
		\[
			\lim_{n \to \infty}\|x_n^i(\omega)-x_n^j(\omega)\| = 0, \quad \forall i,j \in [N].
		\]
		\item We say that a sample path $(X_n(\omega))$ \emph{converges} if there exists a vector $x_\infty(\omega)$ such that
		\[
			\lim_{n \to \infty}\|x_n^i(\omega)-x_\infty(\omega)\| = 0, \quad \forall i \in [N].
		\]
	\end{enumerate}
\end{definition}

In the study of CBO-type models, the main theoretical interest is to propose suffiicient conditions under which consensus would occur and, when it does, if it would occur at an optimum. However, a rigorous analysis remains largely unsolved for particle-level models. Our key contribution lies in providing a CBO-type particle model with which almost every sample path exhibits a consensus under some reasonable assumptions. The main results regarding \eqref{update} are summarized as follows.
\begin{theorem}[Informal summary]\label{T1.1}
	Let $f:\bbr^d \to \bbr \cup \{+\infty\}$ be a Borel measurable function with a closed effective domain $S$. Suppose that $\rho_{\mathrm{in}}$ is an initial probability distribution on the search space $\mathbb{R}^d$ such that each initial position $x^i_0$ is independently drawn from $\rho_{\mathrm{in}}$, and $\mathrm{supp}(\rho_{\mathrm{in}}) \subset S$. Let $\omega \in \Omega$ and $(x^i_n(\omega))$ be a sample point and a sequence of iterates generated by \eqref{update}, respectively. Then, a suitable set of parameters $\Gamma$ and its subset $\Gamma'$ exist, independent of the number of agents $N$ and dimension $d$, for which the following assertions hold:
\begin{enumerate}
    	\item For $(\gamma^1,\bar{\gamma}^1,\gamma^2,\bar{\gamma}^2) \in \Gamma$, almost every $\omega$ ensures that if the iterate cannot improve the best value $f(p_n(\omega))$, the sample path converges.
	\item Let $f^*$ be the essential infimum of $f$ with respect to the Lebesgue measure. Suppose that $(\gamma^1, \bar{\gamma}^1, \gamma^2, \bar{\gamma}^2) \in \Gamma'$ and some initial position $x^i_0$ is contained in a bounded sub-level set of $f$. Then for almost every $\omega$, either the sample paths reach a consensus or $f(p_n)$ converges to $f^*$ or below, provided that the noises are fully isotropic. In particular, if $f$ is continuous on $S$ with a unique global minimizer $x^*$ and $f^* = f(x^*)$ holds, then a consensus emerges almost surely.
    \item Suppose that $f$ is continuous on $S$ and a global minimizer exists in the support of $\rho_{\mathrm{in}}$. Then $f_\infty := \lim_{n \to \infty}f(p_n)$ exists and is finite, and for any $\varepsilon > 0$, the probability that
    	\begin{align}\label{X}
    		f_\infty < \min f + \varepsilon
    	\end{align}
    is not less than $1-(1-P_\varepsilon)^N$, where $P_\varepsilon > 0$ is a positive constant that may depend on $\rho_{\mathrm{in}}$, dimension $d$, regularity of $f$, and $\varepsilon$.
    \item Suppose that a continuous function $f_m$ exists such that $f_m(0) = 0$, $f_m(x - x^*) \leq f(x) - f(x^*)$ for any global minimizer $x^*$ of $f$, and $f_m$ has a strict local minimum at the origin. If $f$ has a non-empty bounded sub-level set, then the distance between $p_n$ and a set of global minimizers converge to 0 as $\varepsilon \searrow 0$, provided that the relation \eqref{X} occurs.
    \end{enumerate}
\end{theorem}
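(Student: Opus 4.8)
The plan is to prove assertion (4) by reducing it to a deterministic statement about how fast the sub-level sets of $f$ shrink onto the minimizer set, and then dispatching that statement with a compactness argument supplemented by the minorant $f_{m}$. Write $\mathcal{X}^{*}:=\{x\in S:f(x)=\min f\}$ and let $L:=\{x:f(x)\le c\}$ be a non-empty bounded sub-level set. First I would record two (deterministic, essentially already-available) facts: $n\mapsto f(p_{n})$ is non-increasing, since the best index at step $n$ is a fixed point of \eqref{update} — both correction terms vanish when $x_{n}^{i}=p_{n}$ — so $f(p_{n+1})\le f(p_{n})$; and $f_{\infty}=\lim_{n}f(p_{n})$ exists by assertion (3). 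Hence, on the event that \eqref{X} holds and once $\varepsilon$ is small enough that $\min f+\varepsilon\le c$, there is $n_{0}=n_{0}(\omega)$ with $p_{n}\in L$ and $f(p_{n})<\min f+\varepsilon$ for all $n\ge n_{0}$. It therefore suffices to show $\psi(\varepsilon):=\sup\{\mathrm{dist}(x,\mathcal{X}^{*}):x\in L,\ f(x)<\min f+\varepsilon\}\to0$ as $\varepsilon\searrow0$, since then $\sup_{n\ge n_{0}}\mathrm{dist}(p_{n},\mathcal{X}^{*})\le\psi(\varepsilon)$.

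For the region near $\mathcal{X}^{*}$ I would use $f_{m}$. Continuity of $f_{m}$, $f_{m}(0)=0$, and the \emph{strict} local minimality of $0$ give $\bar{r}>0$ with $f_{m}(y)>0$ for $0<\|y\|\le\bar{r}$, so $\Phi(r):=\min_{r\le\|y\|\le\bar{r}}f_{m}(y)$ is positive and non-decreasing on $(0,\bar{r}]$. If $x\in L$ and $0<r:=\mathrm{dist}(x,\mathcal{X}^{*})\le\bar{r}$, choose (using that $\mathcal{X}^{*}$ is closed) a minimizer $x^{*}$ with $\|x-x^{*}\|=r$; then the minorant property gives $\Phi(r)\le f_{m}(x-x^{*})\le f(x)-f(x^{*})=f(x)-\min f<\varepsilon$, so $r\le\delta_{0}(\varepsilon):=\sup\{\rho\in(0,\bar{r}]:\Phi(\rho)<\varepsilon\}$, and $\delta_{0}(\varepsilon)\downarrow0$ as $\varepsilon\downarrow0$ because $\Phi>0$ on $(0,\bar{r}]$.

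It then remains to rule out the far region, i.e.\ to show that for all sufficiently small $\varepsilon$ every $x\in L$ with $f(x)<\min f+\varepsilon$ already satisfies $\mathrm{dist}(x,\mathcal{X}^{*})\le\bar{r}$. I would argue by contradiction: otherwise there are $\varepsilon_{k}\downarrow0$ and $x_{k}\in L$ with $f(x_{k})<\min f+\varepsilon_{k}$ and $\mathrm{dist}(x_{k},\mathcal{X}^{*})>\bar{r}$; since $L$ is bounded and $S$ closed, a subsequence converges to some $q\in S$, and lower semicontinuity of $f$ forces $f(q)\le\liminf_{k}f(x_{k})\le\min f$, hence $q\in\mathcal{X}^{*}$, contradicting $\mathrm{dist}(x_{k},\mathcal{X}^{*})>\bar{r}$. (This same argument also shows $\mathcal{X}^{*}$ is non-empty and closed, so all the distances above are attained.) Combining the two regions gives $\psi(\varepsilon)\le\delta_{0}(\varepsilon)\to0$, which by the reduction of the first paragraph yields the claim.

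The hard part will be the far region: configurations with $f(x)$ close to $\min f$ but $x$ far from every global minimizer, about which $f_{m}$ — whose strict local minimum only controls a neighborhood of each minimizer — says nothing. Closing this gap is exactly where the bounded sub-level set and the lower semicontinuity of $f$ become indispensable; the estimate cannot be obtained from $f_{m}$ alone. Everything probabilistic is confined to the monotonicity of $f(p_{n})$ and the existence of $f_{\infty}$, so once those are in hand what remains is the purely deterministic modulus estimate $\psi(\varepsilon)\to0$.
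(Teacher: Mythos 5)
Your proposal addresses only assertion (4) of Theorem \ref{T1.1}, and even there it explicitly invokes assertion (3) (``$f_\infty$ exists by assertion (3)''), so it cannot stand as a proof of the stated theorem. The substantive content of the theorem is in (1)--(3), and none of it appears in your plan: (1) is the paper's Theorem \ref{T3.3}, which requires the parameter condition \eqref{B1} and an almost-sure contraction argument for the random products $\prod_l |1-\gamma^1-\gamma^2\eta_l|$ (strong law of large numbers after taking a power $\beta$ with $\mathbb{E}[|1-\gamma^1+\gamma^2\eta|^\beta]<1$), together with the stabilization of the minimizing index $i_\infty$ that makes $p_n$ eventually constant; (2) is Theorem \ref{T3.5}, proved via a uniform lower bound on the probability of jumping into $S^f_\varepsilon$ when $\|p_{n-2}-p_{n-1}\|\ge\varepsilon$, a Borel--Cantelli argument, and the consensus criterion of Proposition \ref{P3.4} (itself a supermartingale argument); (3) is Propositions \ref{P4.1}--\ref{P4.3}, where the bound $1-(1-\rho_{\mathrm{in}}(S^M_\varepsilon))^N$ comes from the absorbing-set/initialization argument and the monotonicity of $f(p_n)$. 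These are the missing ideas; without them the theorem is not proved.

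For the part you do treat, your argument is essentially correct and in fact goes further than the paper's own treatment: the paper only records the inclusion $S^f_\varepsilon\subset S^m_\varepsilon(x^*)$ coming from the minorant and concludes $\limsup_n \mathrm{dist}(S_0^f,p_n)\le \mathrm{diam}(S^m_\varepsilon)$ in Proposition \ref{P4.3}, whereas your two-region argument (the annulus estimate $\Phi(r)\le f(x)-f(x^*)<\varepsilon$ near $\mathcal{X}^*$, plus compactness of the bounded sub-level set and lower semicontinuity for the far region) genuinely yields a modulus $\psi(\varepsilon)\to 0$, which is what the informal ``distance converges to $0$'' claim needs and what the diameter bound alone does not give when $f_m$ fails to control points far from $\mathcal{X}^*$. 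Two caveats you should make explicit: lower semicontinuity of $f$ is not among the hypotheses of (4) and must be imported from the continuity of $f$ on the closed set $S$ assumed for \eqref{X}; and your reduction ``$p_n\in L$ eventually'' requires the bounded sub-level set $\{f\le c\}$ to have level $c>\min f$ (so that $\min f+\varepsilon\le c$ for small $\varepsilon$), which the hypothesis ``some non-empty bounded sub-level set exists'' does not by itself guarantee.
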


The precise definitions of $\Gamma$ and $\Gamma'$ are provided in Section \ref{sec:3}. The first two statements of Theorem \ref{T1.1} guarantee a consensus in practice, while the last two statements present a lower bound for the performance of \eqref{update}. Since $f(p_n)$ is precisely the smallest value among $f(x^i_m)$ for $ i \in [N]$ and $m \in \{0,1,\ldots,n\}$, $f(p_n)$ is monotonically decreasing in $n$ (see Proposition \ref{P3.2}) and one may assume a priori that updates will eventually cease. Under this assumption, the first statement indicates that a consensus occurs when the parameters belong to $\Gamma$. Furthermore, it can be inferred from the proof that the algorithm terminates soon when some agents locate the global minimizer early (see Remark \ref{R3.3} and Remark \ref{R4.8}). The second statement is distinct from the first statement in that it applies irrespectively of whether or not $f(p_n)$ improves. When CBO \eqref{CBO} is approximated by a PDE, which is informally understood as a CBO with infinitely many agents, the latter usually guarantees the localization of the diffusion process at a global minimizer, if it is contained in the support of the initial distribution. The third and fourth statements can be understood as the particle-level counterparts of this phenomenon for a finite $N$. \newline
\indent The contributions of this paper are as follows: first, we provide sufficient conditions for consensus or convergence that do not depend on the number of particles $N$. Previous approaches have been limited to rather restrictive conditions: homogeneous noises \cite{BHKLMY, HJK,HJK2}, or a diminishing impact of noise over time \cite{CJL,HHK}, or increasing $N$ \cite{KHJK}. Our results are not bound by these limitations. Second, we demonstrate the efficacy of our model even when multiple dynamics \eqref{DCBOa} and \eqref{DCBOb} are employed simultaneously, enhancing exploration capabilities. Third, we establish a quantitative lower bound for the probability of achieving global optimization. None of these aspects have been previously investigated at the particle level.

The rest of the paper is organized as follows. Section \ref{sec:2} reviews previous models related to \eqref{update} and introduces the main algorithm. Section \ref{sec:3} studies conditions for the emergence of a stochastic consensus state. Section \ref{sec:4} presents conditions under which DCBO achieves global optimization. Section \ref{sec:5} provides numerical simulations and applications to portfolio optimization, neural networks and compressed sensing. Finally, Section \ref{sec:6} concludes the paper with a discussion. \newline

\textbf{Notation and assumptions.} Throughout the paper, $\|\cdot\|$ denotes the standard $\ell^2$-norm on $\mathbb{R}^d$. For $m \in \mathbb{N}$, we denote $[m] := \{1, 2, \ldots, m\}$. The dimension of the search space and the number of particles are denoted by $d$ and $N$, respectively.
We assume $f$ to be the objective function to be optimized, which is at least Borel measurable. We identify a real-valued function $f$ on $S$ with $f: \mathbb{R}^d \to \mathbb{R} \cup \{+\infty\}$, where the effective domain of $f$ is $S$. The essential infimum of $f$ with respect to the Lebesgue measure is denoted by $f^*$, and we only consider functions for which $f^* < +\infty$. If $f^* \in \mathbb{R}$, the set $S_\varepsilon^f$ denotes the sub-level set $\{x \mid f(x) \leq f^* + \varepsilon\}$. The initial probability distribution (measure) on the search space $\mathbb{R}^d$ is denoted as $\rho_{\mathrm{in}}$, and each initial position $x^i_0$ is drawn from $\rho_{\mathrm{in}}$ for $i \in [N]$, where $\mathrm{supp}(\rho_{\mathrm{in}}) \subset S$. Multivariate normal distribution of dimension $d$ with mean vector $\mu$ and covariance matrix $\Sigma$ is denoted by $\mathcal{N}_d(\mu,\Sigma)$. We denote the underlying probability space of DCBO by $(\Omega, \mathcal{F}, \mathbb{P})$, where each sample point in $\Omega$ corresponds to a product of $N$ independent copies of the initial distribution $\rho_{\mathrm{in}}$ and a sequence of $\eta^i_n \overset{\mathrm{i.i.d.}}{\sim} \mathcal{N}_d(0,I)$. We denote a sample point and the corresponding sequence of iterates generated by \eqref{update} by $\omega$ and $(x_i^n(\omega))$, respectively.

\section{Preliminaries} \label{sec:2}
\setcounter{equation}{0}

In this section, we review state of the art in the CBO models and introduce the DCBO algorithm.

\subsection{Previous results} The design of CBO type algorithms involves substituting appropriate functions into $F$ and $G$ in \eqref{CBO}, and, if necessary, adding additional terms to incorporate further effects. The analysis of CBO algorithms is considered at three levels:

\begin{enumerate}
    \item Continuous-time SDE: The seminal work \cite{CBO} introduces CBO as a system of continuous SDEs. For suitable functions $F:\mathbb{R}^d \rightarrow \mathbb{R}^d$ and $G:\mathbb{R}^d \rightarrow \mathbb{R}^{d \times d}$, it reads as:
    \begin{align}\label{SDECBO}
        dx^i_t = \lambda F(\bar{x}_t - x^i_t) dt + \sigma G(\bar{x}_t - x^i_t) dW^i_t, \quad
        \bar{x}_t = \frac{\sum_{k=1}^N x^k_t e^{-\beta f(x^k_t)}}{\sum_{k=1}^N e^{-\beta f(x^k_t)}}.
    \end{align}
    \item PDE: When the number of agents $N$ is sufficiently large, CBO can be approximated by a single PDE via the stochastic mean-field limit. When $G(x)$ is a diagonal matrix, the empirical mean of solutions to \eqref{SDECBO} can be effectively approximated by $\rho$ which is a solution of the following PDE:
    \begin{align}\label{PDECBO}
        \partial_t \rho_t = -\lambda \nabla \cdot (F(\bar{x}[\rho_t]-x) \rho_t) + \frac{\sigma^2}{2}\sum_{k=1}^d \partial_{kk} \big((G(\bar{x}[\rho_t]-x)_{kk}^2 \rho_t\big)
    \end{align} ~ \vspace{-.7cm}
    \begin{align*}
        \bar{x}[\rho_t] = \frac{\int_{\mathbb{R}^d} x e^{-\beta f(x)} \, d\rho_t(x)}{\int_{\mathbb{R}^d} e^{-\beta f(x)} \, d\rho_t(x)}, \quad
        M_{kk} := \text{the $k$-th diagonal entry of matrix $M$.}
    \end{align*}
    \item Discretized SDE: Since the actual implementation of the continuous-time CBO algorithm is discrete, this form is especially relevant for direct optimization applications. They are usually derived by applying the Euler-Maruyama discretization to \eqref{SDECBO}:
    \begin{align}\label{discCBO}
        x^i_{n+1} = x^i_n + h F(\bar{x}_n - x^i_n) + \sqrt{h} \sigma G(\bar{x}_n - x^i_n) \eta^i_n, \quad
        \eta_n^i \overset{\mathrm{i.i.d.}}{\sim} \mathcal{N}_d(0,I).
    \end{align}
\end{enumerate}
	In the above models, typically $F(x) = x$ and $G$ is selected from one of the following two forms:
\[
    G(x) =
    \begin{cases}
        \|x\|I_d, & \text{for isotropic diffusion}, \\
        \mathrm{diag}(x), & \text{for anisotropic diffusion},
    \end{cases}
\]
where $I_d \in \mathbb{R}^{d \times d}$ is the identity matrix, and $\mathrm{diag}(x)$ is the diagonal matrix with the vector $x$ as its diagonal. These two types of diffusion offer comparative advantages depending on the situation: isotropic diffusion allows for more exploratory search across the domain, while anisotropic diffusion can efficiently search high-dimensional spaces \cite{CJLZ}.

The analysis of \eqref{SDECBO} and \eqref{discCBO} poses huge challenges; the consensus and convergence analysis largely remains an open question. Consequently, much of the research has pivoted towards the study of the approximated PDE \eqref{PDECBO}, which is easier to analyze as the inherent randomness converges to a deterministic diffusion form. Notably, the works \cite{CCTT, FKR, FKR22} demonstrate that $\rho_t$ tends to concentrate near the global minimizer when the initial distribution includes a global minimizer in its support. Such mean-field analysis provides a substantial insight into the operation of CBO. 

Nevertheless, it is challenging to determine how far the results from the mean-field model apply to the actual workhorse \eqref{discCBO}. Primarily, the assumption on the macroscopic equation \eqref{PDECBO} that the initial distribution $\rho_{\mathrm{in}}$ includes a global minimizer inherently presupposes knowledge of its location. Moreover, the clarity on to what extent \eqref{discCBO} can perform global optimization with a certain probability based on the analysis of \eqref{PDECBO} also remains uncertain. To the authors' knowledge, the following Proposition is the sole result that quantifies the probability of global optimization in \eqref{discCBO} from the analysis of \eqref{PDECBO}.

\begin{proposition}[\cite{FKR}]\label{P2.1.5}
    Let $ ((X_{k\Delta t}^i)_{k=0,\ldots,K})_{i \in [N]} $ be the iterates generated by \eqref{discCBO} with isotropic diffusion and time-step $\Delta t$, where $K$ matches the time horizon $K\Delta t = T$. Assume that the initial data and system parameters are well-prepared, and $f$ satisfies suitable regularity conditions. Then, we have
    \[
        \left\| \frac{1}{N}\sum_{i=1}^N X^i_{K \Delta t} - x^* \right\|^2 \leq \varepsilon_{\mathrm{total}},
    \]
    with probability at least $ 1-(\delta + \varepsilon^{-1}_{\mathrm{total}}(C_{\mathrm{NA}}\Delta t + C_{\mathrm{MFA}}N^{-1}+6\varepsilon)) $, where $\delta, \varepsilon, \varepsilon_{\mathrm{total}}, C_{\mathrm{NA}}$, and $C_{\mathrm{MFA}}$ are positive constants, and $C_{\mathrm{NA}}$ depends linearly on the dimension $d$ and the number of agents $N$.
\end{proposition}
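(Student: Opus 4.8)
The plan is to reproduce the by-now-standard ``mean-field route'' for global-convergence proofs of CBO: split the total error into a numerical-analysis part, a mean-field-approximation part, and a part measuring how well the mean-field dynamics itself locates $x^*$, and then assemble these via a synchronous coupling, a union bound, and Markov's inequality. First I would introduce two auxiliary processes coupled to the iterates $(X^i_{k\Delta t})$ through the \emph{same} initial data and the \emph{same} Brownian increments: the continuous-time system $(x^i_t)$ solving \eqref{SDECBO} with isotropic $G(x)=\|x\|I_d$, and a family $(\bar x^i_t)_{i\in[N]}$ of i.i.d.\ McKean--Vlasov trajectories, each solving $d\bar x_t = \lambda(\bar x[\rho_t]-\bar x_t)\,dt + \sigma\|\bar x[\rho_t]-\bar x_t\|\,dW^i_t$ with law $\rho_t$ solving \eqref{PDECBO}. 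Writing $m_T := \int_{\mathbb R^d} x\,d\rho_T(x)$ for the mean of the mean-field law, the triangle inequality gives
\begin{align*}
    \Big\|\tfrac1N{\textstyle\sum_i} X^i_{K\Delta t} - x^*\Big\|
    &\le \Big\|\tfrac1N{\textstyle\sum_i}(X^i_{K\Delta t}-x^i_T)\Big\|
      + \Big\|\tfrac1N{\textstyle\sum_i}(x^i_T-\bar x^i_T)\Big\| \\
    &\quad + \Big\|\tfrac1N{\textstyle\sum_i}\bar x^i_T - m_T\Big\|
      + \big\|m_T - x^*\big\|,
\end{align*}
so it suffices to control: (NA) the Euler--Maruyama error $\max_i\mathbb E\|X^i_{K\Delta t}-x^i_T\|^2$; (MFA) the propagation-of-chaos error $\max_i\mathbb E\|x^i_T-\bar x^i_T\|^2$; (LLN) the fluctuation $\mathbb E\|\tfrac1N\sum_i\bar x^i_T-m_T\|^2$ of an empirical mean of i.i.d.\ vectors; and (ML) the mean-field localization $\|m_T-x^*\|^2 \le 2\mathcal V(T)$ with $\mathcal V(t):=\tfrac12\int\|x-x^*\|^2\,d\rho_t$.

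For (ML) I would run the Lyapunov analysis at the PDE level. Differentiating $\mathcal V$ along \eqref{PDECBO} with $F(x)=x$, $G(x)=\|x\|I_d$ and a Young inequality yields a differential inequality of the schematic form
\begin{equation*}
    \frac{d}{dt}\mathcal V(t) \le -\big(2\lambda - c\,d\sigma^2\big)\mathcal V(t) + C(\lambda,\sigma,d)\,\|\bar x[\rho_t]-x^*\|\,\big(\sqrt{\mathcal V(t)}+\|\bar x[\rho_t]-x^*\|\big),
\end{equation*}
so that, under the usual smallness $2\lambda > c\,d\sigma^2$, $\mathcal V$ contracts exponentially down to a level set by the Laplace-principle error $\|\bar x[\rho_t]-x^*\|$. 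Under the standing assumptions of \cite{FKR} on $f$ (local Lipschitz continuity and boundedness, a unique well-separated minimizer $x^*$ with $f^*=f(x^*)$, and an inverse-continuity/growth bound $\|x-x^*\|\le C_\sharp(f(x)-f^*)^{1/\kappa}$ near $x^*$), the quantitative Laplace principle gives, for each radius $r>0$ and each $\beta$, an estimate $\|\bar x[\rho_t]-x^*\| \le C_\sharp\,r^{1/\kappa} + (\text{moment of }\rho_t)\cdot\rho_t(B_r(x^*))^{-1}e^{-\beta(\text{gap})}$. The delicate point, as always in this circle of ideas, is to propagate a uniform-in-$t$ lower bound $\rho_t(B_r(x^*))\ge \varrho_r>0$; this is obtained by a Grönwall estimate for $t\mapsto\rho_t(B_r(x^*))$ built from the (Gaussian-type) mollifying effect of the diffusion, and it is precisely here that the well-preparedness of $\rho_{\mathrm{in}}$ — $\rho_{\mathrm{in}}(B_r(x^*))>0$ and $\mathcal V(0)$ not too large — is consumed. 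Tuning $r$ and $\beta$ and taking $T=K\Delta t$ large then gives $\|m_T-x^*\|^2 \lesssim \varepsilon$, the book-keeping slack accounting for the constant in ``$6\varepsilon$''.

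The remaining three terms are handled as $L^2$ estimates valid off a bad event. A synchronous-coupling argument, using that the drift and the isotropic diffusion in \eqref{SDECBO} are locally Lipschitz with constants growing polynomially in $\max_i\|x^i\|$ and (through the softmin weights entering $\bar x$) in $N$, combined with a stopping-time truncation at the first exit from a large confining ball, gives $\max_i\mathbb E\|X^i_{K\Delta t}-x^i_T\|^2\le C_{\mathrm{NA}}\Delta t$ — the $d$-linearity of $C_{\mathrm{NA}}$ coming from the trace term $\mathrm{tr}(GG^\top)=d\|\cdot\|^2$ in the one-step Euler local error and its $N$-linearity from the Lipschitz constant of $\bar x$ — and likewise $\max_i\mathbb E\|x^i_T-\bar x^i_T\|^2\le C_{\mathrm{MFA}}N^{-1}$; the (LLN) term is $O(N^{-1})$ because the $\bar x^i_T$ are i.i.d.\ with mean $m_T$. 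The probability $\delta$ is exactly the (Markov-controlled, via a uniform bound $\sup_{t\le T}\mathbb E\|x^i_t\|^2<\infty$) probability that the truncation is ever activated. Finally I would square the triangle inequality, take expectations, restrict to the complement of the bad event, and apply Markov's inequality at level $\varepsilon_{\mathrm{total}}$; this turns the bound $C_{\mathrm{NA}}\Delta t + C_{\mathrm{MFA}}N^{-1} + O(N^{-1}) + (\text{deterministic }\lesssim\varepsilon)$ on the expected squared error into the asserted lower bound $1-\big(\delta + \varepsilon_{\mathrm{total}}^{-1}(C_{\mathrm{NA}}\Delta t + C_{\mathrm{MFA}}N^{-1} + 6\varepsilon)\big)$ for the probability that $\|\tfrac1N\sum_i X^i_{K\Delta t}-x^*\|^2\le\varepsilon_{\mathrm{total}}$. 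I expect the main obstacle to be the (ML)/Laplace step: closing the estimate without circularity, since one needs $\|\bar x[\rho_t]-x^*\|$ small for $\mathcal V$ to decay, while the Laplace bound on $\|\bar x[\rho_t]-x^*\|$ needs the mass lower bound $\rho_t(B_r(x^*))\ge\varrho_r$, which in turn must be propagated using control of $\mathcal V$; untangling this bootstrap and simultaneously tuning $\beta,r,T,\Delta t,N,\varepsilon$ so that all four error contributions sit below the target is the technical heart of the proof.
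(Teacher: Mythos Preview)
Your proposal is a faithful sketch of the argument in \cite{FKR}, but note that the present paper does \emph{not} prove Proposition~\ref{P2.1.5} at all: it is quoted from \cite{FKR} (as the bracketed citation in the proposition header indicates) and used only as motivation to explain the gap between the PDE analysis and the behavior of the implemented scheme. There is therefore no ``paper's own proof'' to compare against; the authors simply state the result and immediately discuss its dilemma, namely that $C_{\mathrm{NA}}$ scales linearly with $N$ so the probability bound degrades as $N$ grows.

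That said, your outline is essentially the route taken in \cite{FKR}: synchronous coupling of the Euler iterates, the interacting SDE system, and i.i.d.\ McKean--Vlasov copies; a Lyapunov decay for $\mathcal V(t)$ combined with the quantitative Laplace principle and a propagated lower bound on $\rho_t(B_r(x^*))$; and a final Markov/union-bound assembly. One small correction: in \cite{FKR} the $N$-linearity of $C_{\mathrm{NA}}$ does not come from a Lipschitz constant of the softmin map $\bar x$ but rather from the fact that the Euler--Maruyama strong error is applied to the full $Nd$-dimensional coupled SDE \eqref{SDECBO}, so the state-space dimension itself scales with $N$. Your identification of the (ML)/Laplace bootstrap as the technical heart is accurate.
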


\indent The tendency for $\sum_{i=1}^N X^i_{K \Delta t}/N$ to converge to a global minimizer is proved by the analysis of PDE \eqref{PDECBO}. The terms $C_{\mathrm{NA}}\Delta t$ and $C_{\mathrm{MFA}}N^{-1}$ represent errors arising from time-discretization and mean-field approximation, respectively. Proposition \ref{P2.1.5} presents a dilemma. Consider a scenario where all parameters except for $N$ are constant. As $C_{\mathrm{NA}}$ depends linearly on $N$, the probability bound for the success of global optimization decreases to negative as $N$ increases. This contradicts the intuition that a larger $N$ should yield better performance. This lower bound reflects the fact that a discretization error is proportional to the dimension of the state space (in this case, $dN$), rather than an actual performance degradation with an increasing $N$. In summary, the current state of the art reveals a gap in explaining implemented CBO \eqref{discCBO} through PDE \eqref{PDECBO}.

	Therefore, analyzing CBO at the discrete level is crucial for gaining insights into its actual behavior of the algorithm. However, due to the mathematical complexities involved, discrete models have been studied only partially. Ko et al. \cite{KHJK} demonstrated that a consensus occurs almost surely if the second moment of noise diminishes sufficiently as $N$ increases, and Bae et al. \cite{BHKLMY} explored the likelihood of a consensus and global optimization when noise is homogeneous (i.e., $\eta^i_n = \eta^j_n$ for all $i \neq j$). However, these conditions inherently limit the exploration effect, ensuring consensus under significantly constrained circumstances: the work \cite{KHJK} considers a scenario where noise becomes infinitesimally small as $N$ approaches infinity, and the authors in \cite{BHKLMY} simplifies the analysis at the cost of performance degradation caused by the significant under-exploration of the search space (we refer to Section \ref{sec:5} for empirical results). 

	To summarize the discussion so far, many of the theoretical properties of the actual CBO algorithm remains uncharted. This motivates us to introduce the DCBO model in which mathematical analysis is feasible.

%

\subsection{The model}\label{sec:2.2} Our model is defined via the following two diffusion maps 
\begin{align}\label{model}
\begin{cases}
	&F_1(x,y,\eta) = x + \gamma^1(y - x) + \gamma^2(y - x) \odot \eta,\\
	&F_2(x,y,\eta) = x + \bar{\gamma}^1(y - x) + \bar{\gamma}^2\|y - x\|\frac{\eta}{\sqrt{d}}.
\end{cases}
\end{align}

Note that $F_1$ can be formally derived from \eqref{discCBO} by taking the limit \eqref{lim} and substituting $h\lambda$ and $\sqrt{h}\sigma$ for $\gamma^1$ and $\gamma^2$, respectively. The derivation of $F_2$ follows a similar procedure. Note that $F_1$ and $F_2$ correspond to anisotropic and isotropic diffusions, respectively. For a nonempty closed set $S$, we define the DCBO recurrence as follows.
\begin{align}\label{DCBO}\tag{DCBO}\begin{aligned}
	&\displaystyle p_n := x^{\min \mathcal{I}_n}_n, \quad \mathcal{I}_n := \{ i \in [N] \mid f(x^i_n) = \min_{j \in [N]} f(x^j_n) \}, \vspace{.2cm} \\
	&x^i_{n+1} = F^i(x^i_n,p_n,\eta^i_n), \quad F^i = F_1 ~ \text{or} ~ F_2,
\end{aligned}\end{align}
where $x^i_0 \in S, \quad \eta_n^i \overset{\mathrm{i.i.d.}}{\sim} \mathcal{N}_d(0,I), \quad n \in \mathbb{N} \cup \{0\}, \quad i \in [N]$. In other words, we define the consensus point $p_n$ as the location of the agent with a smallest function value, and with the smallest index. Without loss of generality, we assume that $f$ takes the value of $+\infty$ outside $S$. If $S$ is a convex set so that the projection $\mathcal{P}_S$ onto $S$ is defined, then we further update $x^i_{n+1}$ by the projection for each iteration. That is, we change the second line of \eqref{DCBO} into
\begin{align}\label{proj}
	x^i_{n+1} = \mathcal{P}_S(F^i(x^i_n,p_n,\eta^i_n)).
\end{align}
Model \eqref{DCBO} employs a mixture of two discrete diffusion processes. The system parameters $\gamma^1,\gamma^2,\bar{\gamma}^1,\bar{\gamma}^2$ are positive constants representing drift and noise intensity, respectively. All the noises are external: $\eta^i_n$ is independent of $x^k_m$ and $\eta^{k'}_{m'}$ for all $k,k' \in [N], k' \neq i$ and $m \in [n] \cup \{0\}, ~ m' \in \mathbb{N} \cup \{0\}$. The pseudocode for the model \eqref{model} is presented in Algorithm \ref{Alg1}.

\begin{algorithm}
\caption{DCBO}\label{Alg1}
\begin{algorithmic}[1]
    \State \textbf{Input:} function $f$, closed domain $S$, parameters $(\gamma^1,\gamma^2,\bar{\gamma}^1, \bar{\gamma}^2)$,  the number of agents $N$, stopping criterions $\text{max\_iter}$ and $\text{max\_dist}$
    \State Initialize $x^i$ in $S$ for all $i \in [N]$
    \State $p \leftarrow x^{\min \mathcal{I}}, \quad \text{where} \quad \mathcal{I} = \{ i \in [N] \mid f(x^i) = \min_{j \in [N]} f(x^j) \}$
    \State $n \leftarrow 0$
    \While{$n < \text{max\_iter}$ and $\max \|x^i-p\| \geq \text{max\_dist}$}
        \State $x^i \leftarrow F^i(x^i,p,\eta^i_n)$ for all $i \in [N]$
        \If{Projection $\mathcal{P}_S$ into $S$ is defined}
            \State $x^i \leftarrow \mathcal{P}_S(x^i)$ for all $i \in [N]$
        \EndIf
        \State Update $p \leftarrow x^{\min \mathcal{I}}, \quad \text{where} \quad \mathcal{I} = \{ i \in [N] \mid f(x^i) = \min_{j \in [N]} f(x^j) \}$
        \State $n \leftarrow n+1$
    \EndWhile
    \State \textbf{return} $(p,f(p))$.
\end{algorithmic}
\end{algorithm}

It is worth re-emphasizing that \eqref{DCBO} is \emph{not} a discretization of a SDE and inherently discrete. To see this, suppose that \eqref{DCBO} can be derived from some continuous SDE and consider $p_t$, a continuous counterpart of $p_n$. For continuous SDE models, it is not evident whether $p_t = X^{\min \mathcal{I}_t}_t$, where $\mathcal{I}_t := \{ i \in [N] \mid f(X^i_t) = \min_{j \in [N]} f(X^j_t) \}, ~ t \geq 0$, can be utilized as a consensus point since the map $t \mapsto p_t$ may be discontinuous even if $f$ is continuous; usually a local Lipschitz continuity of the coefficient functions is required for a solution to \eqref{SDECBO} to be well-defined. Even worse, at the mean-field level, it is difficult to define either $\mathcal{I}_t$ or $p_t$ on a continuum. Therefore, \eqref{DCBO} cannot be defined as a discretization of the PDE model \eqref{PDECBO} either.

	Algorithm \ref{Alg1} has several advantages compared to the conventional CBO algorithms:
\begin{enumerate}
    \item Expected performance: analyses of conventional CBO suggest better performance for larger values of $\beta$ \cite{CJLZ, CBO, TW}. Consequently, enhanced performance can be expected under the regime $\beta = \infty$, which corresponds, at least phenomenologically, to our model.
    \item Analysis: construction of $p_n$ facilitates mathematical analysis of \eqref{discCBO} previously deemed unfeasible by allowing for consistently choosing one element from the set of minimizers, ensuring agents to exhibit convergence or consensus (see Proposition \ref{P3.2} and Theorem \ref{T3.3}).
    \item Simplified assumptions: conditions on DCBO are straightforward compared to continuous SDE or PDE models \eqref{SDECBO} and \eqref{PDECBO}. Given the requirement of local Lipschitz continuity for the coefficient functions, $f$ must be at least locally Lipschitz to ensure the existence of a solution in continuous models. In \eqref{PDECBO}, the conditions for parameters $\rho_{\mathrm{in}},\beta,\lambda$,$\sigma$ and the regularity condition of $f$ achieving global optimization are complexly interwound, making it challenging to identify a set of parameters that satisfy these conditions. In contrast, DCBO permits consensus/convergence under clear and intuitive conditions when the function is merely continuous or measurable.
\end{enumerate}

\begin{remark}
    DCBO is designed to terminate at the $n$-th iteration when the agents are close to a consensus state, i.e., $\max_{i,j}\|x^i_n - x^j_n\| \leq \varepsilon$ for a small $\varepsilon$. To validate the consensus, it is sufficient to observe $\max_{i}\|x^i_n - p_n\| \leq \varepsilon/2$ by applying the triangle inequality. This latter condition is employed in Algorithm \ref{Alg1} as it incurs lower computational cost. Notably, in light of the consensus criterion (see Proposition \ref{P3.4}), the emergence of a consensus might be validated by
    \[
        \|p_{n+1} - p_n\| < \varepsilon \text{ for sufficiently many consecutive instances of } n,
    \]
which further reduces computational cost.
\end{remark}

\section{Emergence of a consensus}\label{sec:3}
\setcounter{equation}{0}

In this section, we study several conditions for Algorithm \ref{Alg1} to achieve a consensus. First, we prove that a sample path exhibits a convergnece under some mild assumptions. Then we propose a criterion in which a consensus would occur.

\subsection{Convergence analysis}

As discussed in Section \ref{sec:2.2}, selecting a ``current best point'' $p_n$ as the consensus point facilitates mathematical analysis. This is based on the fact that $p_n$ cannot escape a sub-level set, which is illustrated by the following simple proposition.

\begin{proposition}\label{P3.2} Let $(x^i_n)$ be a sequence of iterates generated by Algorithm \ref{Alg1}. Then $f(p_n)$ is monotone decreasing in $n$. In particular, $p_n \in S$ for each $n \in \mathbb{N}$.
\end{proposition}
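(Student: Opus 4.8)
The plan is to exploit the key structural fact that the consensus point $p_n$ is, by construction, an actual agent position, namely $p_n = x_n^{\min\mathcal I_n}$, and that the diffusion maps $F_1,F_2$ both fix their second argument: $F_1(y,y,\eta)=y$ and $F_2(y,y,\eta)=y$ for any $\eta$. More importantly, I will track the quantity $f(p_n)$ directly. First I would observe that at step $n$, the agent attaining the minimum, call it agent $j=\min\mathcal I_n$, is updated to $x_{n+1}^j = F^j(x_n^j,p_n,\eta_n^j)$; but since $x_n^j = p_n$, the update map returns $x_{n+1}^j = F^j(p_n,p_n,\eta_n^j) = p_n$ (in the projected case \eqref{proj}, note $p_n\in S$ so $\mathcal P_S(p_n)=p_n$ as well). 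Hence $p_n$ is itself among the agent positions at time $n+1$, i.e. $p_n \in \{x_{n+1}^1,\ldots,x_{n+1}^N\}$.

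Next I would use the definition of $p_{n+1}$: it is the position of the agent with the smallest value of $f$ at time $n+1$. Since $p_n$ is one of the candidate positions at time $n+1$ (by the previous paragraph), we must have $f(p_{n+1}) = \min_{i\in[N]} f(x_{n+1}^i) \leq f(p_n)$. This establishes that $f(p_n)$ is monotone decreasing in $n$. Then, for the second assertion, I would argue that $f(p_n) \le f(p_0) = \min_i f(x_0^i) < +\infty$ because $x_0^i \in S$ for all $i$ by hypothesis; since $f$ takes the value $+\infty$ exactly outside $S$, finiteness of $f(p_n)$ forces $p_n \in S$ for every $n$. (A clean induction also works: $p_0\in S$ trivially; if $p_n\in S$ then $f(p_n)<\infty$, so $f(p_{n+1})\le f(p_n)<\infty$, hence $p_{n+1}\in S$.)

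I do not expect any genuine obstacle here — the statement is essentially a consequence of the ``hardmin'' bookkeeping. The one point requiring a line of care is the projection variant \eqref{proj}: one must note that the projection step does not spoil the argument, which follows because $p_n\in S$ (already in $S$ by the inductive hypothesis or by monotonicity of $f(p_\cdot)$), so $\mathcal P_S$ leaves the carried-over best position untouched, and the minimum over the projected positions is still no larger than $f(p_n)$. A second minor point is the tie-breaking rule (smallest index): it is irrelevant to the inequality $f(p_{n+1})\le f(p_n)$, since that only concerns the value $\min_i f(x_{n+1}^i)$, not which index achieves it.
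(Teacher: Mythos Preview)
Your proposal is correct and follows essentially the same argument as the paper: the agent realizing $p_n$ is a fixed point of the update (since $F_1(y,y,\eta)=F_2(y,y,\eta)=y$), so $p_n$ reappears among the positions at time $n+1$ and hence $f(p_{n+1})\le f(p_n)$; finiteness of $f(p_0)$ then forces $p_n\in S$. Your additional remarks on the projection variant and the tie-breaking rule are sound and slightly more explicit than the paper's version, but the core idea is identical.
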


\begin{proof}
	Let $i_n$ be a random index satisfying $p_n = x^{i_n}_n$. Then for each event, $x^{i_n}_{n+1} = F^{i_n}(x_n^{i_n}, p_n, \eta_n^{i_n}) = x^{i_n}_n$ and
	\[
		f(p_n) = f(x^{i_n}_n) = f(x^{i_n}_{n+1}) \geq f(p_{n+1}). 
	\]
	Since $f(p_0) < \infty$, we have $f(p_n) < \infty$ for each $n$, and therefore $p_n \in S$.
\end{proof}

	Proposition \ref{P3.2} states that $f(p_n)$ represents the best objective value not only at time $n$ but also for all preceding times up to $n$. Hereafter, we indicate that Algorithm \ref{Alg1} updates the optimal value (at time $n$) if $f(p_{n+1}) < f(p_n)$. \newline

Intuitively, if the deterministic effect $\gamma^1$ or $\bar{\gamma}^1$, which drags each agent to $p_n$, is too small compared to the intensity of external noise $\gamma^2$ or $\bar{\gamma}^2$, then one cannot expect a consensus. In order to provide appropriate parameter candidates, we will proceed under the following conditions: \vspace{.5cm}

\textbf{Condition.} The system parameters $(\gamma^1,\gamma^2)$ and $(\bar{\gamma}^1,\bar{\gamma}^2)$ satisfy the following conditions.
\begin{subequations}\label{B1}
    \begin{align}
        &\gamma^1 \in (0,1), \quad \gamma^2 \geq 0, \quad \inf_{\alpha > 0}\mathbb{E}[|1- \gamma^1 + \gamma^2 \eta|^\alpha] < 1 \label{B1a}\\
       &\bar{\gamma}^1 \in (0,1), \quad \bar{\gamma}^2 \geq0, \quad \bar{\gamma}^1 \geq \bar{\gamma}^2.  \label{B1b}
    \end{align}
\end{subequations}
where $\eta \sim \mathcal{N}_1(0,1)$. For example, $\eqref{B1a}$ is satisfied if
\begin{align}
	&\mathbb{E}[|1- \gamma^1 + \gamma^2 \eta|]
	= \gamma^2\sqrt{\frac{2}{\pi}}\exp\left( -\frac{(1-\gamma^1)^2}{2(\gamma^2)^2}\right) + \gamma^1\left( 1 - 2\Psi\left( -\frac{\gamma^1}{\gamma^2}\right) \right) < 1, \label{B2}\\
	&\mathbb{E}[|1- \gamma^1 + \gamma^2 \eta|^2] = (1-\gamma^1)^2 + \gamma^2 < 1, \label{B3}
\end{align}
where $\Psi$ is the normal cumulative distribution, i.e.
$
	\Psi(x) := \frac{1}{\sqrt{2\pi}}\int_{-\infty}^x e^{-\frac{t^2}{2}}dt.
$ ~ \newline

\indent Proposition \ref{P3.2} states that $f(p_n)$ decreases its value whenever it updates. Therefore, for any proper function $f$, it is natural to assume a priori that $f(p_n)$ would terminate its update eventually. In this situation, the following theorem asserts that consensus is attained under very mild conditions.

\begin{theorem}\label{T3.3}
	Suppose that $f$ is a Borel measurable function, and let $(x^i_n)$ be the iterates generated by Algorithm~\ref{Alg1}. Then, for almost every $\omega \in \Omega$, either the current best objective vlaue $f(p_n)$ keeps updating, or $(x^i_n(\omega))$ converge.
\end{theorem}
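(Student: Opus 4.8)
The plan is to condition on the event that $f(p_n)$ eventually stops updating, say $f(p_n) = f(p_{n_0}) =: v$ for all $n \geq n_0$, and to show that on this event almost every sample path converges. Restarting the clock at $n_0$, we may assume $f(p_n) = v$ for all $n$ and, by Proposition~\ref{P3.2}, that $p_n \in S$ for all $n$. The key structural observation is that the index realizing the best value need not be constant, but whichever agent attains value $v$ at time $n$ is \emph{frozen} at that step (its update map fixes it, as in the proof of Proposition~\ref{P3.2}). The first step is therefore to understand the remaining, non-optimal agents: for such an agent $i$, the displacement from $p_n$ evolves by
\[
	x^i_{n+1} - p_{n+1} = \big(F^i(x^i_n,p_n,\eta^i_n) - p_n\big) + (p_n - p_{n+1}).
\]
I would first treat the simpler situation where the minimizing index stabilizes (so $p_n \equiv p$ for large $n$): then $x^i_{n+1} - p = (1-\gamma^1)(x^i_n - p) + \gamma^2 (x^i_n - p)\odot\eta^i_n$ in the anisotropic case (and the analogous contraction-type recursion in the isotropic case, where $\|x^i_{n+1}-p\| \le |1-\bar\gamma^1 + \bar\gamma^2 \zeta|\,\|x^i_n-p\|$ for a standard normal $\zeta$, using $\bar\gamma^1 \ge \bar\gamma^2$). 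Taking coordinatewise absolute values (anisotropic) or norms (isotropic) and applying a logarithm, one gets a random product $\prod_{k} |1-\gamma^1+\gamma^2\eta^i_{k,\ell}|$; by condition~\eqref{B1a} there exists $\alpha>0$ with $\mathbb{E}[|1-\gamma^1+\gamma^2\eta|^\alpha] =: c < 1$, so the $\alpha$-th moment of $\|x^i_n - p\|$ decays geometrically, and a Borel--Cantelli / Fubini argument (the quantity $\sum_n \mathbb{E}\|x^i_n-p\|^\alpha < \infty$, hence $\|x^i_n-p\|^\alpha \to 0$ a.s.) gives $x^i_n \to p$ almost surely. Summability of $\|x^i_{n+1}-x^i_n\|$ along the way identifies the limit and yields genuine convergence, not merely consensus.

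The main obstacle is that the best index $\min\mathcal{I}_n$ may keep changing forever even while the best \emph{value} $v$ is frozen — e.g., several agents could repeatedly visit the level set $\{f = v\}$. To handle this I would argue that each $p_n$ still lies in the closed set $\{f \le v\} = \{f = v\}$ (since $v = f^\ast$-level is now a minimum of the running record), and more importantly bound $\|p_{n+1} - p_n\|$: either $p_{n+1} = p_n$ (minimizing agent unchanged or another frozen agent sits at the same point), or $p_{n+1} = x^j_{n+1}$ for some $j$ that was non-optimal at time $n$, in which case $\|p_{n+1}-p_n\| \le \|x^j_{n+1} - p_n\| \le (1+\gamma^1 + \gamma^2\|\eta^j_n\|_\infty)\,\|x^j_n - p_n\|$ (with the obvious isotropic analogue). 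Thus $\|p_{n+1}-p_n\|$ is dominated by $\max_i \|x^i_n - p_n\|$ up to an integrable random factor, and the whole system is governed by the single scalar quantity $D_n := \max_{i\in[N]}\|x^i_n - p_n\|$. The plan is to close a recursive inequality of the form $D_{n+1} \le \Lambda_n D_n$ where $\Lambda_n$ is a random variable built from the $\eta^i_n$'s whose $\alpha$-th moment (for the $\alpha$ from~\eqref{B1a}, and using~\eqref{B1b} for the isotropic contributions) is strictly less than $1$ after taking into account the at most $N$ agents; here one pays a factor of $N$ in the constant but \emph{not} in the contraction rate, which is exactly why the conclusion is $N$-independent. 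Summing, $\mathbb{E}[D_n^\alpha] \to 0$ geometrically, so $D_n \to 0$ a.s., which is consensus; and then $\sum_n \|x^i_{n+1} - x^i_n\| \le \sum_n (1+\gamma^1+\gamma^2\|\eta^i_n\|_\infty) D_n < \infty$ a.s.\ (the factors are i.i.d.\ with finite mean and independent of the geometrically small $D_n$), so each $x^i_n$ is Cauchy and converges to a common limit $x_\infty(\omega)$.

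Finally I would assemble the pieces: let $A$ be the event ``$f(p_n)$ updates infinitely often'' and $A^c$ its complement; on $A^c$, decompose further over the (countably many) possible stabilization times $n_0$ and over which update map each agent uses, apply the estimate above on each piece, and conclude $\mathbb{P}(A^c \cap \{(x^i_n) \text{ does not converge}\}) = 0$. Hence for a.e.\ $\omega$, either $\omega \in A$ (the best value keeps improving) or $(x^i_n(\omega))$ converges, which is the claim. The one technical point to be careful about is measurability of $D_n$ and of the frozen-index structure — but since $[N]$ is finite and $f$ is Borel, all the relevant indicator events ($\{i \in \mathcal{I}_n\}$, $\{p_{n+1} = p_n\}$, etc.) are measurable, so Fubini and Borel--Cantelli apply without difficulty.
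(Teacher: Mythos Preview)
Your proposal handles the ``simpler situation'' (minimizing index eventually constant) correctly, but the part you flag as ``the main obstacle'' is a phantom, and your workaround for it is where the actual gap lies. Once $f(p_n)$ has stabilized at a value $v$, the index $\min\mathcal I_n$ is \emph{non-increasing}: if $j=\min\mathcal I_n$ then $x^j_{n+1}=F^j(p_n,p_n,\eta^j_n)=p_n$, so $f(x^j_{n+1})=v=f(p_{n+1})$ and hence $j\in\mathcal I_{n+1}$, giving $\min\mathcal I_{n+1}\le\min\mathcal I_n$. A non-increasing sequence in $[N]$ is eventually constant, so $p_n\equiv p_\infty$ for all large $n$ and you are automatically in your easy case. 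This is precisely the role of the tie-breaking rule in \eqref{DCBO}, and it is the paper's key structural step (the claim \eqref{convclaim}). Your side remark that ``whichever agent attains value $v$ is frozen'' is not quite right---only the one with the smallest index is---but monotonicity of $\min\mathcal I_n$ is all that is needed.

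As for the $D_n$ recursion you propose in lieu of this observation: it is not only unnecessary but also not justified under~\eqref{B1a} alone. For an anisotropic agent, passing from the coordinatewise identity $|x^{i,k}_{n+1}-p^k_n|=|1-\gamma^1-\gamma^2\eta^{i,k}_n|\,|x^{i,k}_n-p^k_n|$ to a bound on the Euclidean norm $\|x^i_{n+1}-p_n\|$ forces a $\max_{k\in[d]}$ over the random factors; accommodating the shift $\|p_{n+1}-p_n\|=\|x^{j}_{n+1}-p_n\|$ for a \emph{random} index $j$ forces a further $\max_{i\in[N]}$. The $\alpha$-th moment of the maximum of $Nd$ i.i.d.\ copies of $|1-\gamma^1+\gamma^2\eta|$ is not controlled by the $\alpha$-th moment of a single copy---it diverges with $Nd$, since $\eta$ is unbounded---so the assertion that $\mathbb E[\Lambda_n^\alpha]<1$ with ``only a factor of $N$ in the constant but not in the contraction rate'' cannot hold. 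The paper avoids all of this: after establishing $p_n\equiv p_\infty$, it works coordinatewise for anisotropic agents (each $|x^{i,k}_n-p_\infty^k|$ is a product of i.i.d.\ factors, handled by the strong law of large numbers applied to the logarithm) and with the squared norm for isotropic agents.
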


\begin{proof}
	It suffices to prove that at least one of the following event occurs almost surely:
	\begin{enumerate}
		\item[(a)] $(f(p_n))$ is updated infinitely often. That is, there exists a strictly decreasing subsequence of $f(p_n)$ such that
		\[
			f(p_{n_m}) > f(p_{n_{m+1}}), \quad n_m < n_{m+1}, \quad \forall m \in \mathbb{N}, \quad \lim_{m \to \infty}n_m = \infty. 
		\]
		\item[(b)] There exists a vector $x_\infty \in S$ such that $\lim_{n \to \infty}\max_{i}\|x^i_n-x_\infty\| = 0$. \vspace{.2cm}
	\end{enumerate}
	
\indent We first consider an agent $x_n^i$ satisfying $F^i=F_1$. Suppose that projection $\mathcal{P}_S$ is not defined. Since $f$ is Borel measurable, $\{ \omega \in \Omega \mid f(p_n(\omega)) > f(p_{n+1}(\omega)) \} \in \mathcal{F}$ for each $n \in \mathbb{N}$ so that $\Omega_a:=\{ \omega \in \Omega \mid f(p_n(\omega)) > f(p_{n+1}(\omega)) ~ \text{infinitely often} \} \in \mathcal{F}$ as well. Therefore the probability of event (a) is well defined. If $\mathbb{P}[\Omega_a]=1$, then there is nothing to prove. Assume $\mathbb{P}[\Omega_a]<1$ and fix $\omega \notin \Omega_a$. We claim that for some fixed vector $p_\infty = p_\infty(\omega)$,
	\begin{align}\label{convclaim}
		\text{$p_n=p_\infty$ for all but finitely many $n \in \mathbb{N}$.}
	\end{align}
	($\diamond$ Proof of \eqref{convclaim}) To verify the claim \eqref{convclaim}, it suffices to observe that for some $N_\omega \in \mathbb{N}$, there exists a fixed index $i_\infty = i_\infty(\omega) \in [N]$ such that
		\begin{align}\label{subclaim}
			f(p_n(\omega)) = f(x^{i_\infty}_n(\omega)), \quad f(p_n(\omega)) \notin \{ f(x_n^i) \mid i < i_\infty \}, \quad \forall n > N_\omega.
		\end{align}
	To see that \eqref{subclaim} implies \eqref{convclaim}, let $i_n$ be a random index satisfying $p_n = x^{i_n}_n$. Then \eqref{subclaim} and the construction of \eqref{DCBO} states that $i_n = i_\infty$ for $n > N_\omega$. This leads to
	\begin{align*}
		p_{n+1}=x^{i_{n+1}}_{n+1} =x_{n+1}^{i_\infty} &= F^{i_\infty}(x_n^{i_\infty}, p_n, \eta_n^{i_n}) \\
		&= F^{i_\infty}(x_n^{i_\infty}, x_n^{i_\infty}, \eta_n^{i_n})  = x^{i_\infty}_n = x^{i_n}_n = p_n, \quad \forall n > N_\omega,
	\end{align*}
	which proves \eqref{convclaim}. Therefore we focus on the proof of \eqref{subclaim}. Since $f(p_n)$ is non-increasing in $n$ and $\omega \notin \Omega_a$, $f(p_n)$ converge in a finite time. Therefore for some $N_\omega' \in \mathbb{N}$, $f(x^{i_n}_n)=f(p_n)$ is a constant for $i_n \in \mathcal{I}_n$, $n > N_\omega'$, where $\mathcal{I}_n=\mathcal{I}_n(\omega)$ is defined in \eqref{DCBO}. From the construction of \eqref{DCBO} and definition of $\mathcal{I}_n$, it is straightforward to check
	\begin{align*}
	\min\mathcal{I}_n
		\begin{cases}
			> \min\mathcal{I}_{n+1}, \quad &\text{if $f(x^{j}_{n+1}) = f(p_{n+1})$ for some $j <  \min\mathcal{I}_n$}, \\
			= \min\mathcal{I}_{n+1}, \quad &\text{otherwise}.
		\end{cases}
		\quad \forall n > N_\omega'.
	\end{align*}
	Since $\mathcal{I}_n \subset [N]$ and $\min\mathcal{I}_{n} \geq \min\mathcal{I}_{n+1}$, the limit $i_{\infty} := \lim_{n \to \infty} \min\mathcal{I}_{n}$ exists and $i_{\infty} = \min\mathcal{I}_{n}$ for all but finitely many $n$. Thus for some $N_\omega (> N'_\omega)$, we have $p_n = x^{i_{\infty}}_n$ whenever $n > N_\omega$. Then the minimality of $i_\infty$ proves \eqref{subclaim} and the claim \eqref{convclaim}. \newline
	
Now from \eqref{convclaim}, one has
	\begin{align*}
		x^i_{n+1} - p_\infty
		= (1- \gamma^1)(x^i_{n} - p_\infty) - \gamma^2(x^i_{n} - p_\infty) \odot \eta^{i}_n, \quad \forall n > N_\omega.
	\end{align*}
For a vector $v^i_n \in \bbr^d$, let its $k$-th component be $v^{i,k}_n$. Then for each $k \in [d]$, we have
	\begin{align*}
		&x^{i,k}_{n+1} - p^k_\infty
		= (1- \gamma^1 - \gamma^2 \eta^{i,k}_n)(x^{i,k}_{n} - p^k_\infty), \quad \forall n > N'_\omega \\
		&\Longrightarrow \quad |x^{i,k}_{n+1} - p^k_\infty|
		\leq |1- \gamma^1 - \gamma^2 \eta^{i,k}_n||x^{i,k}_{n} - p^k_\infty|=:X^{i,k}_n|x^{i,k}_{n} - p^k_\infty|, \quad \forall n > N'_\omega \\
		&\Longrightarrow \quad |x^{i,k}_{m} - p^k_\infty|
		\leq X^{i,k}_{m-1}X^{i,k}_{m-2}\cdots X^{i,k}_n |x^{i,k}_{n} - p^k_\infty|, \quad \forall m > n > N_\omega.
	\end{align*}	
	We then show that for some $\beta > 0$, the product $(X^{i,k}_{m-1}X^{i,k}_{m-2}\cdots X^{i,k}_n)^\beta$ converges to zero as $m \to \infty$ almost surely. Recall from the assumption \eqref{B1} that $\inf_{\alpha > 0}\mathbb{E}[|1- \gamma^1 - \gamma^2 \eta|^\alpha] < 1$, which implies $\mathbb{E}[(X_n^{i,k})^\beta] < 1$ for some $\beta>0$. Using $\log{x} \leq x-1$ for $x > 0$, we have
\begin{align}\begin{aligned}\label{B3-1}
	\prod_{l = m-1}^{n} (X^{i,k}_l)^\beta
	\leq \exp\left((n-m+2)\sum_{l = m-1}^{n} \frac{(X^{i,k}_l)^\beta - 1}{n-m+2} \right)
	\xrightarrow[~]{\mathrm{a.s.}} 0,
\end{aligned}\end{align}
where the almost sure limit can be obtained from $\mathbb{E}[(X^{i,k}_n)^\beta - 1] < 0$ and the strong law of large numbers. Therefore $|x^{i,k}_{n} - p^k_\infty|^\beta$ converge to 0 almost surely, conditioned on the event $\Omega \backslash \Omega_a$. We apply this argument to each $k$ and pose $x_\infty = p_\infty$ to obtain the desired result. When $S$ is a convex set, \eqref{proj} kicks in. The proof is similar since the projection map is non-expansive for each coordinate: $| \mathcal{P}_S({x}^{i,k}_n) - \mathcal{P}_S({x}^{j,k}_n)|  \leq |{x}^{i,k}_n - {x}^{j,k}_n|$.

	\indent Now consider the agent $x_n^j$ such that $F^j=F_2$. Since the basic strategy is similar to the previous case, we only outline the proof. Fix $\omega \notin \Omega_a$. Then for all but finitely many $n$, we have
	\begin{align*}
		x^{j,k}_{n+1} - p^k_\infty
		= (1- \bar{\gamma}^1)(x^{j,k}_{n} - p^k_\infty) + \bar{\gamma}^2\|p_\infty-x_n^{j}\|{\eta^{j,k}_n}/{\sqrt{d}}.
	\end{align*}
	Square both sides above and sum over $k \in [d]$. The Cauchy-Schwarz inequality entails
	\begin{align}\begin{aligned}\label{B4}
		\|x^{j}_{n+1} - p_\infty\|^2
		\leq& (1- \bar{\gamma}^1)^2\|x^{j}_{n} - p_\infty\|^2 + (\bar{\gamma}^2)^2 \|p_\infty-x_n^{j}\|^2 {\sum_{k \in [d]} (\eta^{j,k}_n)^2}/{d} \\
		&+ 2(1-\bar{\gamma}^1)\bar{\gamma}^2 {\|p_\infty-x_n^{j}\|^2}({ \sum_{k \in [d]} |\eta^{j,k}_n|^2 }/d)^\frac{1}{2}
		=: Y^j_n \|x^j_n - p_\infty\|^2.
	\end{aligned}\end{align}
	Observe that $\mathbb{E}[(\eta^{j,k}_n)^2]=1$ and 
	\[
		\mathbb{E}\Big[ ( \sum_{k \in [d]} |\eta^{j,k}_n|^2 )^\frac{1}{2} \Big] = \sqrt{2}\Gamma({d+1}/{2})/\Gamma({d}/{2}) < \sqrt{d},
	\]
	from the second moment of the Chi-square distribution. Therefore we have
	\[
		\mathbb{E}[ Y^j_n ] = (1 - \bar{\gamma}^1 + \bar{\gamma}^2)^2 - 2(1-\bar{\gamma}^1)\bar{\gamma}^2\left( 1 - {\sqrt{2/d}\Gamma({d+1}/{2})}/{\Gamma({d}/{2})} \right) < 1,
	\]
	where $Y^j_n$ is defined in \eqref{B4}. Then we use a similar procedure to find the desired result. 
\end{proof}

\begin{remark}\label{R3.3}
	The estimate in \eqref{B3-1} states that the convergence rate of $\|x^i_n-x^j_n\|$ is exponential for all but finitely may $n$ \emph{after $f(p_n)$ ceases to improve}. Therefore, if some agents locate the global minimizer, it can be inferred that the algorithm terminates soon.
\end{remark}

Theorem \ref{T3.3} states that inability to improving the objective is a sufficient condition for a consensus. Observe that this is not a necessary condition. To this end, the emergence of a consensus state is almost characterized by the vanishing of the change in the current best position.

\begin{proposition}[Consensus criterion]\label{P3.4} Let $f$ be a measurable function and $(x^i_n)$ be a sequence of iterates generated by Algorithm~\ref{Alg1}. Suppose that \eqref{B2} holds or $(1 - \bar{\gamma}^1 + \bar{\gamma}^2)^2 \leq \frac{1}{2}$. Then for almost every $\omega$, a sample path $X_n(\omega) = (x_n^1(\omega),\ldots,x_n^N(\omega))$ exhibits a consensus if and only if $\lim_{n \to \infty}\|p_n(\omega)-p_{n+1}(\omega)\| = 0$.
\end{proposition}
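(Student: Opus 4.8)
\medskip
\noindent\textbf{Proof proposal.} The equivalence splits into an easy ``only if'' and a substantive ``if'' direction, and only the latter uses the standing hypothesis on the parameters.

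\emph{The ``only if'' direction.} The plan is to exploit that the current best agent is a fixed point of its own update: if $i_n:=\min\mathcal I_n$ then $p_n=x^{i_n}_n$, and since $F_1(y,y,\eta)=F_2(y,y,\eta)=y$ (and $\mathcal P_S(p_n)=p_n$ because $p_n\in S$ by Proposition~\ref{P3.2}), we get $x^{i_n}_{n+1}=p_n$. Hence $p_n$ is itself one of $x^1_{n+1},\dots,x^N_{n+1}$, so $\|p_{n+1}-p_n\|=\|x^{i_{n+1}}_{n+1}-x^{i_n}_{n+1}\|\le\max_{i,j}\|x^i_{n+1}-x^j_{n+1}\|$, which tends to $0$ whenever the path reaches a consensus. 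No restriction on the parameters is needed here.

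\emph{The ``if'' direction.} Assume $\epsilon_n:=\|p_n-p_{n+1}\|\to0$ along the sample path and aim at $r_n:=\max_i\|x^i_n-p_n\|\to0$, which is equivalent to consensus since $r_n\le\max_{i,j}\|x^i_n-x^j_n\|\le2r_n$. By Theorem~\ref{T3.3} the path already converges off the event $\Omega_a$ from the proof of that theorem, so it suffices to argue on $\Omega_a$. Writing $a^i_n:=x^i_n-p_n$ and treating $p_n-p_{n+1}$ as an inhomogeneous perturbation, \eqref{DCBO} gives, for an anisotropic agent and each coordinate $k$,
\[
 |a^{i,k}_{n+1}|\le M^{i,k}_n\,|a^{i,k}_n|+\epsilon_n,\qquad M^{i,k}_n:=|1-\gamma^1-\gamma^2\eta^{i,k}_n|,
\]
with $\mathbb E[M^{i,k}_n]<1$ by \eqref{B2}; and for an isotropic agent, from $\|a^j_{n+1}\|\le\widetilde M^j_n\|a^j_n\|+\epsilon_n$ with $\widetilde M^j_n:=(1-\bar\gamma^1)+\bar\gamma^2\|\eta^j_n\|/\sqrt d$, squaring and using $(a+b)^2\le2a^2+2b^2$,
\[
 \|a^j_{n+1}\|^2\le 2(\widetilde M^j_n)^2\|a^j_n\|^2+2\epsilon_n^2,
\]
where the hypothesis $(1-\bar\gamma^1+\bar\gamma^2)^2\le\tfrac12$ is exactly what forces $2\,\mathbb E[(\widetilde M^j_n)^2]<1$ (the factor $2$ absorbing the elementary inequality). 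Iterating either recursion from a time $n_0$ writes $|a^{i,k}_n|$ (resp.\ $\|a^j_n\|^2$) as a homogeneous product term plus a sum of the perturbations $\epsilon_l$ (resp.\ $2\epsilon_l^2$) weighted by partial products of the i.i.d.\ multipliers.

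The homogeneous term tends to $0$ almost surely: $\mathbb E[\log M^{i,k}_n]\le\log\mathbb E[M^{i,k}_n]<0$ by Jensen, so $\prod_{m<n}M^{i,k}_m\to0$ a.s.\ by the strong law of large numbers (and likewise for $2(\widetilde M^j_n)^2$). The remaining term $z_n:=\sum_{l<n}\big(\prod_{m=l+1}^{n-1}M^{i,k}_m\big)\epsilon_l$ is the heart of the matter: one wants $z_n\to0$ a.s.\ on $\{\epsilon_n\to0\}$. Two structural facts are available — $z_n$ obeys the same recursion $z_{n+1}=M^{i,k}_nz_n+\epsilon_n$ with conditionally sub-unit drift (a Robbins--Siegmund-type inequality, although Robbins--Siegmund does not apply off the shelf since $\sum_n\epsilon_n$ need not be finite), and $\epsilon_l$ is measurable with respect to the history up to time $l$, hence independent of every factor $M^{i,k}_m$ with $m>l$. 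The plan is to localize on the tail events $A_{\delta,m_0}:=\{\epsilon_n<\delta\ \forall\,n\ge m_0\}$, whose union over $m_0$ exhausts $\{\epsilon_n\to0\}$: on $A_{\delta,m_0}$ the part of $z_n$ coming from $l<m_0$ is a fixed random variable times a product that vanishes, and one must bound the part from $l\ge m_0$ by $C\delta$ and then let $\delta\downarrow0$, forcing $z_n\to0$ and hence $r_n\to0$. The \textbf{main obstacle} is precisely this last bound: the multipliers $M^{i,k}_n$ and $\widetilde M^j_n$ are unbounded with Gaussian-type tails, so naively replacing $\epsilon_l$ by $\delta$ and summing fails, because the suffix-product sums $\sum_j\prod_{m=n-j}^{n-1}M^{i,k}_m$ have infinite $\limsup$. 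To tame the rare large-multiplier events I would combine the independence above with a Borel--Cantelli estimate, and additionally use that $\epsilon_n=0$ except at the ``improvement/tie'' steps built into \eqref{DCBO} — Proposition~\ref{P3.2} makes $f(p_n)$ non-increasing and the index-minimality in \eqref{DCBO} makes $\min\mathcal I_n$ non-increasing between improvements, so the perturbation sum runs over a thin set of times, and this thinness is ultimately what pins the agents together. Granting $z_n\to0$ a.s.\ on $\{\epsilon_n\to0\}$, we conclude consensus; the projection variant \eqref{proj} is handled as in Theorem~\ref{T3.3} since $\mathcal P_S$ is a non-expansion fixing $p_n\in S$.
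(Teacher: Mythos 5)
Your ``only if'' argument and your set-up for the ``if'' direction coincide with the paper's: the same one-step inequality $|x^{i,k}_{n+1}-p^k_{n+1}|\le X^{i,k}_n|x^{i,k}_n-p^k_n|+|p^k_n-p^k_{n+1}|$ for anisotropic agents, and the squared recursion with the factor $2$ absorbed by $(1-\bar\gamma^1+\bar\gamma^2)^2\le\frac12$ for isotropic ones. The problem is that the ``if'' direction is not finished: everything hinges on showing that the inhomogeneous term $z_n=\sum_{J<n}\|p_J-p_{J+1}\|\prod_{K=J+1}^{n-1}X^{i,k}_K$ tends to $0$ a.s.\ on the event $\{\|p_n-p_{n+1}\|\to0\}$, and you explicitly stop at ``granting $z_n\to0$''. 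The tools you list do not close this gap: Robbins--Siegmund is, as you note, unavailable because $\sum_n\|p_n-p_{n+1}\|$ need not be finite; the Borel--Cantelli control of rare large multipliers is never formulated, let alone carried out; and the ``thinness of the improvement times'' is not a usable fact --- nothing in \eqref{DCBO} prevents $p_n$ from changing at every single iteration (the best value can strictly decrease by ever smaller amounts, or the minimizing index can keep shifting), so the perturbation $\|p_n-p_{n+1}\|$ may be nonzero for all $n$. As it stands, the proposal is a correct outline with the decisive analytic step missing.

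The paper closes exactly this step by a conditioning argument rather than a pathwise tail estimate: with $Z^{i,k}_n:=\sum_{J=0}^{n}\prod_{K=J+1}^{n}X^{i,k}_K$ one has the recursion $Z^{i,k}_n=X^{i,k}_nZ^{i,k}_{n-1}+1$, hence $\mathbb{E}\bigl[Z^{i,k}_{n+1}-\tfrac{1}{1-\mu}\,\big|\,\mathcal{F}_n\bigr]=\mu\bigl(Z^{i,k}_n-\tfrac{1}{1-\mu}\bigr)$ with $\mu=\mathbb{E}[X^{i,k}_n]<1$ by \eqref{B2}; invoking the supermartingale structure and the uniform bound $\mathbb{E}[Z^{i,k}_n]\le\tfrac{1}{1-\mu}$, it concludes via the martingale convergence theorem that $\lim_n Z^{i,k}_n$ is a.s.\ finite. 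Splitting the sum in \eqref{B4-0-1} at a time $M_\varepsilon$ after which $\|p_n-p_{n+1}\|<\varepsilon$, the pre-$M_\varepsilon$ part vanishes with the suffix products, the tail is bounded by $\varepsilon Z^{i,k}_{n-1}$, and letting $\varepsilon\downarrow0$ gives consensus; the isotropic case is identical with $Y^j_n$ and $2\mathbb{E}[Y^j_n]<1$. In other words, the quantity you flagged as the obstacle is tamed not by a deterministic bound on suffix-product sums but by the stochastic recursion they satisfy; supplying a lemma of this type (or completing your Borel--Cantelli program in detail) is what your proposal still needs.
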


\begin{proof}
We will assume that $\mathcal{P}_S$ does not exist, since the proof under its existence naturally follows from the fact that projection map is non-expansive for each coordinate. Obviously, consensus implies $\|p_n-p_{n+1}\| \to 0$ because $p_n, p_{n+1} \in \{ x_{n+1}^i \mid i \in [N] \}$ for each $n$. Note that $x^i_{n+1}=x^i_n$ if $x^i_n=p_n$. To prove the converse, let $\Omega'$ be the event $\|p_n-p_{n+1}\| \to 0$. If $\mathbb{P}[\Omega']=0$, then there is nothing to prove. Therefore we may assume $\mathbb{P}[\Omega']>0$. First consider the case $F^i=F_1$. Similar to the proof of Theorem \ref{T3.3}, we have
\[
	x^{i,k}_{n+1} - p^k_{n+1} = (1 - \gamma^1 - \gamma^2\eta^{i,k}_n)(x^{i,k}_n - p^k_{n}) + (p_n^k - p_{n+1}^k).
\]
Take the absolute value on both sides and apply the triangle inequality to obtain
\[
	|x^{i,k}_{n+1} - p^k_{n+1}|
	\leq |1 - \gamma^1 - \gamma^2\eta^{i,k}_n||x^{i,k}_n - p^k_n| + |p_n^k - p_{n+1}^k|
	= X^{i,k}_n|x^{i,k}_n - p^k_n| + |p_n^k - p_{n+1}^k|,
\]
where $X_n^{i,k}$'s are defined in the proof of Theorem \ref{T3.3}. Then by induction,
\begin{align}\label{B4-0-1}
	|x^{i,k}_{n} - p^k_{n}|
	\leq |x^{i,k}_{0} - p^k_{0}|\prod_{I=0}^{n-1} X^{i,k}_I 
	+ \sum_{J=0}^{n-1} |p_J^k - p_{J+1}^k|\prod_{K = J+1}^{n-1} X^{i,k}_K,
\end{align}
where $\prod_{K = n}^{n-1} X^{i,k}_K = 1$. The same argument that employed in \eqref{B3-1} ensures that $\prod_{I=m}^{n-1} X^{i,k}_I$ converges to zero almost surely for each $m \in \{0\} \cup \mathbb{N}$. Furthermore, for $Z^{i,k}_{n-1}:=\sum_{J=0}^{n-1}\prod_{K = J+1}^{n-1} X^{i,k}_K$, observe that $Z^{i,k}_n = X^{i,k}_nZ^{i,k}_{n-1}+1$ to see
\[
	\mathbb{E}[Z^{i,k}_{n+1} - 1/(1-\mu)  \mid \mathcal{F}_n ] = \mu\left( Z^{i,k}_n - 1/(1-\mu) \right), \quad \mu = \mathbb{E}[X_n^{i,k}],
\]
where $\mathcal{F}_n$ is a $\sigma$-algebra generated by $\eta_m^j$ for $m \in [n] \cup \{0\}$ and $j \in [N]$. From \eqref{B2}, for $\mu \in (0,1)$, $Z^{i,k}_n - 1/(1-\mu)$ is a super-martingale. Since the expectation of $Z^{i,k}_n$ is uniformly bounded above by $\sum_{n=0}^\infty \mu^n = 1/(1-\mu)$, the limit of $Z_n^{i,k}$ is well defined and almost surely finite from the martingale convergence theorem. Now fix $\omega \in \Omega'$, so that $\|p_m-p_{m+1}\| < \varepsilon$ for any $\varepsilon>0$ whenever $m > M_\varepsilon$. From \eqref{B4-0-1},
\begin{align*}
	|x^{i,k}_{n} - p^k_{n}|
	\leq& \underbrace{|x^{i,k}_{0} - p^k_{0}|\prod_{I=0}^{n-1} X^{i,k}_I}_{\to ~ 0 ~ \text{a.s.}}  \\
	&+ \underbrace{\sum_{J=0}^{M_\varepsilon} |p_J^k - p_{J+1}^k|\prod_{K = J+1}^{n-1} X^{i,k}_K}_{\to ~ 0 ~ \text{a.s.}}
	+ \underbrace{\sum_{J=M_\varepsilon+1}^{n-1} |p_J^k - p_{J+1}^k|\prod_{K = J+1}^{n-1} X^{i,k}_K}_{\leq ~ \varepsilon Z_{n-1}^{i,k}}.
\end{align*}
Since the choice of $\varepsilon > 0$ is arbitrary and $\lim_{n \to \infty} Z_{n}^{i,k}$ exists almost surely in $\bbr_+$, we conclude $|x^{i,k}_{n} - p^k_{n}| \to 0$ almost surely. We apply this argument to each $i,k$ to deduce the desired result when $F^i=F_1$. \newline
\indent Now we consider the case $F^j=F_2$. In this case obtain
\[
	x^{j,k}_{n+1} - p_{n+1}^k = (1-\bar{\gamma}^1)(x^{j,k}_n - p_n^k) + \bar{\gamma}^2|p_n - x^j_n|{\eta_{n,k}^j}/{\sqrt{d}} + (p_n^k-p_{n+1}^k).
\]
From calculations similar to those used to derive \eqref{B4} and the fact that $(a+b)^2 \leq 2a^2+2b^2$, we have
\[
	\|x_{n+1}^{j} - p_{n+1}\|^2 \leq 2Y_n^j\|x_n^j-p_n\|^2 + 2\|p_n-p_{n+1}\|^2,
\]
where $Y_n^j$ is defined in \eqref{B4}. From the assumption $(1 - \bar{\gamma}^1 + \bar{\gamma}^2)^2 \leq \frac{1}{2}$, we have $\mathbb{E}[Y_n^j] < 1/2$ and this completes the proof by the same argument.
\end{proof}

Proposition \ref{P3.4} reveals that in order to ensure a consensus, it suffices to examine the behavior of the sequence $(p_n)$, whereas Proposition \ref{P3.2} indicates that $p_n$ is constrained in some sub-level set. Combining these insights, we deduce a dichotomy: irrespective of the progress of $(f(p_n))$, agents either exhibit a consensus or attain the global minimum.

\begin{theorem}[Boundedness implies consensus or global optimization]\label{T3.5} Let $(x^i_n)$ be a sequence of iterates generated by Algorithm~\ref{Alg1} with initial data $(x^i_0)$ under the fully isotropic diffusion (i.e. $F^j=F_2$ for each $j \in [N]$) and the parameters satisfy $(1 - \bar{\gamma}^1 + \bar{\gamma}^2)^2 \leq \frac{1}{2}$. Suppose that $f$ is Borel measurable and let $f^*$ be an essential infimum of $f$ with respect to the Lebesgue measure. Assume that the sub-level set $\{ x \in S \mid f(x) \leq \min_{i \in [N]} f(x_0^i) \}$ is bounded. Then for almost every sample path, either $\lim_{n \to \infty} f(p_n) \leq f^*$ or the agents exhibit a consensus. In particular, if $f$ is continuous on $S$ with a unique global minimizer $x^*$ and $f^*=f(x^*)$ holds, then a consensus emerges almost surely.
\end{theorem}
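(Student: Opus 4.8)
The plan is to prove the dichotomy first and then read off the last sentence. By Proposition~\ref{P3.2}, $f(p_n)$ is nonincreasing, so $f_\infty:=\lim_{n\to\infty}f(p_n)\in[-\infty,f(p_0)]$ exists; if $f_\infty\le f^*$ we are done, so the real task is to show that, conditioned on $\{f_\infty>f^*\}$, a consensus occurs almost surely. I would decompose $\{f_\infty>f^*\}=\bigcup_{m}\{f_\infty>f^*+1/m\}$, fix $m$, set $c:=f^*+\tfrac1{2m}$ and $A:=\{x:f(x)\le c\}$. Since $c>f^*=\essinf f$, $A$ has positive Lebesgue measure; on $\{f_\infty>f^*+1/m\}$ we have $A\subseteq\{f\le f(p_0)\}$, which is bounded by hypothesis, and $f(x^i_n)\ge f(p_n)\ge f_\infty>c$ shows the event $G^i_n:=\{x^i_{n+1}\in A\}$ (which lies in $\mathcal F_{n+1}$) \emph{never occurs}. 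By the conditional Borel--Cantelli lemma (L\'evy's extension), $\{G^i_n\text{ i.o.}\}=\{\sum_n\mathbb P(G^i_n\mid\mathcal F_n)=\infty\}$ a.s., hence $\sum_n\mathbb P(G^i_n\mid\mathcal F_n)<\infty$ a.s.\ on $\{f_\infty>f^*+1/m\}$ for every agent $i$.

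The engine is then two Gaussian small-ball estimates. In the fully isotropic regime the conditional law of $x^i_{n+1}$ given $\mathcal F_n$ is $\mathcal N_d\big(p_n+(1-\bar\gamma^1)(x^i_n-p_n),\,(\bar\gamma^2)^2\|x^i_n-p_n\|^2 I_d/d\big)$ (I take $\bar\gamma^2>0$; if $\bar\gamma^2=0$ the dynamics is deterministic and contracting toward $p_n$, so a consensus is immediate; the projected variant \eqref{proj} is handled the same way using that $\mathcal P_S$ is $1$-Lipschitz and fixes $S$, and I suppress it). Restricting further to the event that the bounded sub-level set lies in a fixed ball $B(0,\ell)$ — so $p_n\in B(0,\ell)$ and, since $1-\bar\gamma^1<1$, the Gaussian mean stays in a fixed bounded set — a change of variables rescaling $A$ by $\sqrt d/(\bar\gamma^2\|x^i_n-p_n\|)$ sends $A$ into a fixed ball as soon as $\|x^i_n-p_n\|\ge\epsilon_1$, giving $\mathbb P(x^i_{n+1}\in A\mid\mathcal F_n)\ge c_7\,\|x^i_n-p_n\|^{-d}$ with $c_7>0$ fixed. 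The identical computation applied to the ball $B\big(p_n,\epsilon_1+\mathrm{diam}\{f\le f(p_0)\}\big)$ in place of $A$ bounds the probability of a ``far-to-near jump'' $D^i_n:=\{\|x^i_n-p_n\|>R_1\}\cap\{\|x^i_{n+1}-p_{n+1}\|<\epsilon_1\}$ by $c_{11}\|x^i_n-p_n\|^{-d}\mathbf 1_{\{\|x^i_n-p_n\|>R_1\}}$. Since both bounds scale like $\|x^i_n-p_n\|^{-d}$, one gets $\mathbb P(D^i_n\mid\mathcal F_n)\le (c_{11}/c_7)\,\mathbb P(G^i_n\mid\mathcal F_n)$, hence $\sum_n\mathbb P(D^i_n\mid\mathcal F_n)<\infty$; by L\'evy's lemma again, $D^i_n$ occurs only finitely often a.s.\ on $\{f_\infty>f^*+1/m\}$.

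To finish the dichotomy I would argue by contradiction: suppose consensus fails on a set of positive probability. Consensus is equivalent to $\max_i\|x^i_n-p_n\|\to0$, hence to $\|x^{i_0}_n-p_n\|\to0$ for some fixed index $i_0$, so one may restrict to a positive-probability event on which $\limsup_n\|x^{i_0}_n-p_n\|\ge\epsilon_0$ for a fixed $\epsilon_0$ (set $\epsilon_1:=\epsilon_0/2$), in particular $\|x^{i_0}_n-p_n\|\ge\epsilon_1$ infinitely often. Combining this with $\sum_n\mathbb P(G^{i_0}_n\mid\mathcal F_n)<\infty$ and the first estimate forces $\sum_{n:\,\|x^{i_0}_n-p_n\|\ge\epsilon_1}\|x^{i_0}_n-p_n\|^{-d}<\infty$, so $\|x^{i_0}_n-p_n\|\to\infty$ along $\{n:\|x^{i_0}_n-p_n\|\ge\epsilon_1\}$, i.e.\ $\|x^{i_0}_n-p_n\|\notin[\epsilon_1,R_1]$ eventually, for every fixed $R_1$. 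On the other hand the one-step bound $\mathbb E\big[\|x^i_{n+1}-p_{n+1}\|^2\mid\mathcal F_n\big]\le\tfrac34\|x^i_n-p_n\|^2+C$, which follows from $(1-\bar\gamma^1)^2+(\bar\gamma^2)^2\le(1-\bar\gamma^1+\bar\gamma^2)^2\le\tfrac12$, $\|p_n-p_{n+1}\|\le\mathrm{diam}\{f\le f(p_0)\}$ and Cauchy--Schwarz, yields $\limsup_n\mathbb E[\|x^{i_0}_n-p_n\|^2\mid\mathcal F_0]<\infty$, hence (Fatou) $\liminf_n\|x^{i_0}_n-p_n\|<\infty$ a.s.; decomposing over the fixed level $R_1$, this means $\|x^{i_0}_n-p_n\|\le R_1$ — hence, by the previous sentence, $<\epsilon_1$ — infinitely often. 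Since also $\|x^{i_0}_n-p_n\|\ge\epsilon_0$ infinitely often while the value $\epsilon_0\in[\epsilon_1,R_1]$ is avoided eventually, $\|x^{i_0}_n-p_n\|>R_1$ infinitely often as well, so the process oscillates between $(R_1,\infty)$ and $[0,\epsilon_1)$ and therefore performs infinitely many far-to-near jumps $D^{i_0}_n$ — contradicting the second paragraph. Unwinding the countably many decompositions (over $m,\ell,i_0,\epsilon_0,R_1$, each rational or integer and held fixed on its piece so that the constants are genuine constants) gives the dichotomy.

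For the final assertion, if $f$ is continuous on $S$ with a unique global minimizer $x^*$ and $f^*=f(x^*)$, then $f^*=\min_S f$, so $\{f\le f^*\}=\{x^*\}$ and the case $f_\infty\le f^*$ forces $f_\infty=f^*$. As $f$ is lower semicontinuous (continuous on the closed set $S$, $+\infty$ off $S$), the sets $\{f\le f(p_n)\}$ are closed, nested, decreasing, and contained in the bounded set $\{f\le f(p_0)\}$, hence compact with intersection $\{f\le f^*\}=\{x^*\}$; so their diameters tend to $0$, and since $p_n,x^*\in\{f\le f(p_n)\}$ we get $p_n\to x^*$, whence $\|p_n-p_{n+1}\|\to0$ and, by the consensus criterion (Proposition~\ref{P3.4}, applicable since $(1-\bar\gamma^1+\bar\gamma^2)^2\le\tfrac12$), a consensus. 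Together with the dichotomy this gives consensus almost surely. I expect the main obstacle to be exactly the third paragraph: the distance processes $\|x^i_n-p_n\|$ are positive recurrent but unbounded — they make arbitrarily large excursions infinitely often — so one cannot trap all agents in a compact set; the resolution hinges on recognizing that a far-to-near jump and a hit of $A$ are both Gaussian small-ball events of order $\|x^i_n-p_n\|^{-d}$, so the former is summably dominated by the latter, which is summable precisely because $A$ is never reached. Getting this scaling match clean, and organizing the many null-set decompositions so that all constants are genuine constants, is the technical heart.
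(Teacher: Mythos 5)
Your argument is essentially correct for the case the paper actually writes out (no projection, $f^*\in\mathbb{R}$), but it takes a genuinely different route. The paper never analyzes a generic agent: it tracks only the agent that was sitting at the previous best point $p_{n-2}$ and observes that, on $\{p_{n-1}\notin S_\varepsilon^f\}\cap\{\|p_{n-2}-p_{n-1}\|\ge\varepsilon\}$, this agent receives a single isotropic Gaussian kick whose mean lies in $\mathrm{conv}(S_0)$ and whose standard deviation is sandwiched between $\bar{\gamma}^2\varepsilon/\sqrt{d}$ and $\bar{\gamma}^2 D_2/\sqrt{d}$, hence lands in $S_\varepsilon^f$ with probability at least a constant $m>0$ uniform in $n$ and $\omega$ (raised to the power $N$ to avoid identifying the random index). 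Since the entry events $\{p_{n-1}\notin S_\varepsilon^f,\ p_n\in S_\varepsilon^f\}$ are disjoint by Proposition \ref{P3.2}, their probabilities sum to at most one, so $\sum_n\tilde{\mathbb{P}}[\bar{\Omega}_n]<\infty$; ordinary Borel--Cantelli gives ``eventually $p_{n-1}\in S_\varepsilon^f$ or $\|p_{n-2}-p_{n-1}\|<\varepsilon$'', and consensus is obtained through Proposition \ref{P3.4} after letting $\varepsilon\downarrow0$. You instead fix an arbitrary non-consensual agent, whose distance $r_n=\|x^{i_0}_n-p_n\|$ admits no uniform lower bound on the hitting probability, and compensate by matching two small-ball events of the same order $r_n^{-d}$ (hitting $A$ versus a far-to-near jump), invoking L\'evy's conditional Borel--Cantelli twice, and adding an $L^2$ drift estimate to exclude $r_n\to\infty$; consensus then falls out of the crossing contradiction directly, with Proposition \ref{P3.4} needed only for the final ``in particular''. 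The paper's proof is much shorter, yields the cleaner intermediate statement ``$\lim f(p_n)\le f^*$ or $\|p_n-p_{n+1}\|\to0$'', and survives the projected update \eqref{proj} for free because it only needs lower bounds ($\mathcal{P}_S$ fixes $S_\varepsilon^f\subset S$, so projection can only increase the landing probability); yours avoids the $N$-dependent constant $m^N$ and gives a per-agent picture, at the cost of substantially more bookkeeping. Your compactness argument for the last assertion ($p_n\to x^*$ via shrinking nested sub-level sets) is in fact more detailed than the paper's one-line claim.

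Two caveats. First, your parenthetical that the projected variant is ``handled the same way'' does not hold for your method: nonexpansiveness helps the lower bound (the preimage of $A\subset S$ under $\mathcal{P}_S$ contains $A$), but it goes the wrong way for the far-to-near upper bound. If $S$ is unbounded, $\mathcal{P}_S^{-1}\bigl(B(p_{n+1},\epsilon_1)\bigr)$ can be unbounded (e.g.\ a half-cylinder when $S$ is a half-space), and the conditional probability of a far-to-near jump can be of order $r_n^{-(d-1)}$ rather than $r_n^{-d}$, breaking the domination by $\mathbb{P}(G^{i_0}_n\mid\mathcal{F}_n)$; for bounded $S$ the issue disappears since $r_n$ is bounded and your summability alone gives the contradiction. (The paper also writes out only the unprojected case, but its mechanism extends to projection without change.) Second, setting $c=f^*+\tfrac{1}{2m}$ presupposes $f^*\in\mathbb{R}$; the case $f^*=-\infty$ needs $A=\{f\le-M\}$ instead, as the paper notes in one sentence. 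Both points are repairable and do not affect the core of your argument.
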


\begin{proof}
We again focus on the case that $\mathcal{P}_S$ does not exist. First assume that $f^* \in \bbr$. Throughout the proof, we define $\tilde{\mathbb{P}}$ as a probability measure conditioned on the given initial position that some agent lies in a bounded sub-level set:
\[
	\tilde{\mathbb{P}}[\, \cdot \,] := \mathbb{P}[\, \cdot \mid x^i_0, ~ i \in [N]], ~ \text{where} ~ S_0:=\{ x \in S \mid f(x) \leq \min_{i \in [N]} f(x_0^i) \} ~ \text{is bounded}.
\] Then from the disintegration theorem, it suffices to prove Theorem \ref{T3.5} for $\tilde{\mathbb{P}}$. Recall that the noises are external, so that the distribution of the nosies are invariant under the choice of initial data. Recall that $S_\varepsilon^f = \{ x \in S \mid f^* \leq f^* + \varepsilon \}$. \newline
\indent We claim that for some positive constant $D_1$,
\begin{align}\begin{aligned}\label{B4-1}
	\tilde{\mathbb{P}}&[ (p_{n-1} \notin S_\varepsilon^f) \wedge (p_n \in S_\varepsilon^f)  ] \geq D_1 \tilde{\mathbb{P}}[\bar{\Omega}_n], \\
	&\text{where} ~ \bar{\Omega}_n :=\{ \omega \in \Omega \mid p_{n-1}(\omega) \notin S_\varepsilon^f \} \cap \{ \omega \in \Omega \mid \|p_{n-2}-p_{n-1}\| \geq \varepsilon\}.
\end{aligned}\end{align}
To this end, take any $\omega \in \bar{\Omega}_n$. Since $f$ is Borel measurable, $S_\varepsilon^f$ is Borel measurable and has a positive Lebesgue measure from the definition of the essential infimum. Now, choose $j=j(\omega)$ to be a random index satisfying $p_{n-2} = x_{n-2}^j$. Then we have
\begin{align*}
	&x_{n-1}^j = x_{n-2}^j = p_{n-2} \in S_0, \\
	&\|x_{n-1}^j - p_{n-1}\|=\|x_{n-2}^j - p_{n-1}\|=\|p_{n-2} - p_{n-1}\| > \varepsilon, \\
	&x_n^{j} \in S^f_\varepsilon ~ \Longleftrightarrow ~
	~ p_{n-2} + \bar{\gamma}^1(p_{n-1} - p_{n-2}) + \bar{\gamma}^2 \|p_{n-1}-p_{n-2}\| {\eta_{n-1}^{j}}/{\sqrt{d}} \in S_\varepsilon^f.
\end{align*}
Since each $p_n$ is contained in the bounded set $S_0$ and $\bar{\gamma}^1 \in (0,1)$, $p_{n-2} + \bar{\gamma}^1(p_{n-1} - p_{n-2})$ is contained in $\mathrm{conv}(S_0)$, the closed convex hull generated by $S_0$. Let $D_2$ be a diameter of this convex hull. Note that $D_2$ is finite from the boundedness of $S_0$. In other words, for each $\omega \in \bar{\Omega}_n,p_{n-2}$ and $p_{n-1}$,
\[
	x_n^{j} \sim \mathcal{N}_d(\underbrace{p_{n-2} + \bar{\gamma}^1(p_{n-1} - p_{n-2})}_{\in \mathrm{conv}(S_0)} , \underbrace{(\bar{\gamma}^2 \|p_{n-1}-p_{n-2}\|/{\sqrt{d}})^2I_d}_{\in [(\bar{\gamma}^2\varepsilon)^2/d,(\bar{\gamma}^2D_2)^2/d]} ).
\]
Take any $\mu \in \mathrm{conv}(S_0)$, $\sigma \in [\bar{\gamma}^2\varepsilon/\sqrt{d},\bar{\gamma}^2D_2/\sqrt{d}]$ and suppose that $\zeta_{\mu,\sigma}$ is drawn from $\mathcal{N}_d(\mu,\sigma^2 I_d)$. Let $\mathrm{Leb}$ and $f_{\mu,\sigma}$ refer to the Lebesgue measure and the probability density function of $\mathcal{N}_d(\mu,\sigma I_d)$, respectively. Now consider
\begin{align*}
	&m_{\mu,\sigma}:=\mathrm{Leb}(S_\varepsilon^f)\min_{x \in \mathrm{conv}(S_0)}f_{\mu,\sigma}(x), \\
	&m := \inf\{ m_{\mu,\sigma} \mid \mu \in \mathrm{conv}(S_0), ~ \sigma \in [\bar{\gamma}^2\varepsilon/\sqrt{d},\bar{\gamma}^2D_2/\sqrt{d}]\}.
\end{align*}
Since $S_\varepsilon^f \subset S_0$, the probability of $\zeta_{\mu,\sigma} \in S_\varepsilon^f$ is not less than $m_{\mu,\sigma}$, which is positive from the compactness of $\mathrm{conv}(S_0)$. Also, $m>0$ from the compactness of the domain of $\mu$ and $\sigma$. \newline
\indent In summary, for $\omega \in \bar{\Omega}_n$ and a suitable index $j=j(\omega) \in [N]$, we have $p_n \in S_\varepsilon^f$ whenever $\eta_{n-1}^{j} \in M$ for some random set $M = M(\omega)$. Note that for each $i \in [N]$, the probability of $\eta_{n-1}^i \in M$ is not less than $m$, which is independent of $\omega$. Since $j$ is a random index, instead of specifying $j$, we consider the scenario such that $\eta_{n-1}^i \in M$ for each $i \in [N]$, which has a probability not less than $m^N$. Then the disintegration theorem yields
\[
	\tilde{\mathbb{P}}[ (p_{n-1} \notin S_\varepsilon^f) \wedge (p_n \in S_\varepsilon^f)  ] \geq m^N\tilde{\mathbb{P}}[\bar{\Omega}_n] =: D_1\tilde{\mathbb{P}}[\bar{\Omega}_n],
\]
where $N$ is the number of agents. This proves the claim \eqref{B4-1}. \newline
\indent Now we utilize Proposition \ref{P3.2} to observe
 \begin{align*}
{\small
 	\tilde{\mathbb{P}}[\lim_{n \to \infty}f(p_n) \leq f^* + \varepsilon ]
	\geq \sum_{n=2}^\infty \tilde{\mathbb{P}}[ (p_{n-1} \notin S_\varepsilon^f) \wedge (p_n \in S_\varepsilon^f)  ]
	\geq D_1 \sum_{n=2}^\infty \tilde{\mathbb{P}}[\bar{\Omega}_n].
}
 \end{align*}
Since the leftmost side is at most 1, the Borel-Cantelli lemma implies
\[
	\tilde{\mathbb{P}}[ (p_{n-1} \in S_\varepsilon^f) \vee (\|p_{n-2}-p_{n-1}\| < \varepsilon) ~ \text{eventually} ] = 1.
\]
If $p_{n-1} \in S_\varepsilon^f$ for some $n$, then $\lim_{n \to \infty}f(p_n) \leq f^* + \varepsilon$. If $\|p_{n-2}-p_{n-1}\| < \varepsilon$ eventually, then $\limsup_{n \to \infty}\|p_{n-2}-p_{n-1}\| \leq \varepsilon$. Therefore
\[
	\tilde{\mathbb{P}}[ (\lim_{n \to \infty}f(p_n) \leq f^* + \varepsilon) \vee (\limsup_{n \to \infty}\|p_{n-2}-p_{n-1}\| \leq \varepsilon) ] = 1.
\]
Since the choice of $\varepsilon > 0$ is arbitrary, the continuity of $\tilde{\mathbb{P}}$ leads to
\[
	\tilde{\mathbb{P}}[ (\lim_{n \to \infty}f(p_n) \leq f^*) \vee (\lim_{n \to \infty}\|p_{n-2}-p_{n-1}\| = 0) ] = 1.
\]
In particular, if $f$ is continuous on $S$ with a unique global minimizer $x^*$ and $f^{-1}(f^*)=\{x^*\}$, then $f(p_n) \to f^*$ implies $p_n \to x^*$. This leads to $\|p_{n-2} - p_{n-1}\| \to 0$. Since the vanishing of $\|p_{n-2}-p_{n-1}\|$ is equivalent to a consensus in the sense of Proposition \ref{P3.4}, we have the desired result. When $f^* = -\infty$, the proof is similar if we replace $S_\varepsilon^f$ to $\{ x \in S \mid f(x) < -\varepsilon^{-1} \}$. Note that $f$ cannot be continuous in this case.
\end{proof}

\section{Global optimization}\label{sec:4}
In this section, we investigate the conditions under which agents perform global optimization. Then, based on Proposition \ref{P3.2}, we introduce the DCBO with restart to increase the possibility of global optimization.

\subsection{Conditions for global optimization}
We first show that under a mild condition, Algorithm \ref{Alg1} attains the global minimum of $f$ in the best scenario.
\begin{proposition}\label{P4.1}
	Let $(x^i_n)$ be a sequence of iterates generated by Algorithm \ref{Alg1}, where $x^i_0 \overset{\mathrm{i.i.d.}}{\sim} \rho_{\mathrm{in}}$. Suppose that $f$ is continuous on $S$ and its global minimizer exists in the support of $\rho_{\mathrm{in}}$. Then for $f_\infty := \lim_{n \to \infty}f(p_n)$, we have
	\[
		\essinf_{\omega \in \Omega} f_\infty(\omega) = \min f.
	\]
\end{proposition}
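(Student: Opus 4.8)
The plan is to establish the two inequalities $\essinf_{\omega\in\Omega} f_\infty(\omega) \geq \min f$ and $\essinf_{\omega\in\Omega} f_\infty(\omega) \leq \min f$ separately. The first is immediate from Proposition \ref{P3.2}: the sequence $(f(p_n))$ is non-increasing and each $p_n$ lies in $S$, so $f(p_n) \geq \min f$ for every $n$ (note $\min f = f(x^*) > -\infty$ since a global minimizer $x^*$ exists); letting $n\to\infty$ gives $f_\infty \geq \min f$ surely, hence $\essinf_{\omega} f_\infty(\omega) \geq \min f$. Before this, I would note that $f_\infty$ is a genuine (extended-real-valued) random variable, being the pointwise limit of the measurable maps $\omega \mapsto f(p_n(\omega))$, so that the essential infimum over $\Omega$ is meaningful.

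For the reverse inequality, it suffices to show that for every $\varepsilon > 0$ the event $\{f_\infty < \min f + \varepsilon\}$ has strictly positive probability, since this forces $\essinf_{\omega} f_\infty(\omega) \leq \min f + \varepsilon$ and $\varepsilon$ is arbitrary. Fix a global minimizer $x^* \in \mathrm{supp}(\rho_{\mathrm{in}})$. By continuity of $f$ on $S$, choose $\delta > 0$ so that $f(x) < f(x^*) + \varepsilon = \min f + \varepsilon$ whenever $x \in S$ and $\|x - x^*\| < \delta$. Since $x^*$ belongs to the support of $\rho_{\mathrm{in}}$, the ball $B(x^*,\delta)$ has positive $\rho_{\mathrm{in}}$-measure, so with probability at least $\rho_{\mathrm{in}}(B(x^*,\delta)) > 0$ at least one initial agent $x^i_0$ lands in $B(x^*,\delta)$, and $x^i_0 \in \mathrm{supp}(\rho_{\mathrm{in}}) \subset S$ almost surely. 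On that event $f(p_0) \leq f(x^i_0) < \min f + \varepsilon$, and Proposition \ref{P3.2} gives $f(p_n) \leq f(p_0)$ for all $n$, whence $f_\infty \leq f(p_0) < \min f + \varepsilon$. Therefore $\bbp[f_\infty < \min f + \varepsilon] > 0$, and the conclusion follows.

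There is essentially no hard step: the argument rests entirely on the monotonicity from Proposition \ref{P3.2} (once a good objective value is located it is never lost) together with the elementary fact that some initial agent falls into any prescribed neighborhood of a support point of $\rho_{\mathrm{in}}$ with positive probability. The only point requiring a little care is bookkeeping — correctly matching the definition of $\essinf$ over $\Omega$ with the positive-probability formulation of the event — and making sure the continuity of $f$ is used on $S$ (where the initial agents actually live), not on all of $\bbr^d$.
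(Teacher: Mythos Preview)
Your argument is correct and follows essentially the same route as the paper's own proof: both use continuity of $f$ at a global minimizer $x^*\in\mathrm{supp}(\rho_{\mathrm{in}})$ to produce a neighborhood of positive $\rho_{\mathrm{in}}$-measure on which $f<\min f+\varepsilon$, and then invoke the monotonicity from Proposition~\ref{P3.2} to conclude. The paper compresses the two inequalities into a single chain $\min f \leq \essinf f_\infty \leq \essinf f(p_n) \leq \essinf_{x\in\mathrm{supp}(\rho_{\mathrm{in}})} f(x) = \min f$, whereas you spell out the positive-probability formulation of $\essinf$ more explicitly, but the content is the same.
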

\begin{proof}
Let $x^*$ be a global minimizer contained in the support of $\rho_{\mathrm{in}}$. Since $f$ is continuous on $S$, for any $\varepsilon>0$, $\{ x \in S \mid f(x) < f(x^*) + \varepsilon \}$ is an open set in $S$ containing $x^*$. As $x^*$ is contained in the support of $\rho_{\mathrm{in}}$, $\mathbb{P}[ f(x^i_0) < f(x^*) + \varepsilon]>0$. Now utilize $f(x^i_0) \geq f(p_0) \geq f(p_n) \geq \min f$ to obtain
\[
	\min f \leq \essinf_{\omega \in \Omega} f_\infty(\omega) \leq \essinf_{\omega \in \Omega} f(p_n(\omega)) \leq \essinf_{x \in \mathrm{supp}(\rho_\mathrm{in})}f(x) = f(x^*) = \min f.
\]
\end{proof}

\begin{remark}
For a discretized CBO with anisotropic and homogeneous noises, the work \cite{HJK} asserts that when the agents converge to some random vector $x_\infty$, 
	\[
		\essinf_{\omega \in \Omega} f(x_\infty(\omega)) \leq \min f + E,
	\]
where $E>0$ is a positive constant. Proposition \ref{P4.1} corresponds to the case $E=0$, thereby implying that DCBO improves the cited result from the worst-case analysis perspective.
\end{remark}

A natural question then arises: under what conditions does the global minimum occur? The following simple proposition indicates a condition for global optimization. Recall that $f^*$ is the essential infimum of $f$ with respect to the Lebesgue measure, and $S_\varepsilon^f = \{ x \in S \mid f(x) \leq f^* + \varepsilon \}$. From the definition of $f^*$, $S_\varepsilon^f$ is nonempty and has a positive measure for any $\varepsilon > 0$. 

\begin{proposition}\label{P4.2}
	Let $(x^i_n)$ be a sequence of iterates generated by Algorithm \ref{Alg1}. Suppose that $f^* \in \bbr$. Then the following two statements are equivalent.
	\begin{enumerate}
		\item[(1)] $f(p_n) < f^* + \varepsilon$ for all but finitely many $n \in \mathbb{N}$. 
		\item[(2)] $x^i_n \in S^f_\varepsilon$ for some $i \in [N], n \in \mathbb{N}$.
	\end{enumerate}
\end{proposition}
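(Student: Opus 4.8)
The plan is to read both implications straight off Proposition~\ref{P3.2}, which supplies the only two facts needed and requires no probabilistic input: (i) $p_n = x_n^{\min\mathcal{I}_n}$ is an actual agent realizing the running best value, so $f(p_n)=\min_{j\in[N]} f(x_n^j)\le f(x_n^i)$ for every $i\in[N]$, and $p_n\in S$; (ii) $n\mapsto f(p_n)$ is non-increasing. The argument is entirely pathwise.

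For $(1)\Rightarrow(2)$: if $f(p_n)<f^*+\varepsilon$ for all but finitely many $n$, then $f(p_n)\le f^*+\varepsilon$ for at least one index $n$; since $p_n\in S$, this means $p_n\in S_\varepsilon^f$, and as $p_n=x_n^{\min\mathcal{I}_n}$ we obtain (2) with $i=\min\mathcal{I}_n$ and that same $n$. For $(2)\Rightarrow(1)$: choosing $i\in[N]$ and $n\in\mathbb{N}$ with $x_n^i\in S_\varepsilon^f$, we have $f(p_n)=\min_{j\in[N]} f(x_n^j)\le f(x_n^i)\le f^*+\varepsilon$, and monotonicity propagates this to $f(p_m)\le f(p_n)\le f^*+\varepsilon$ for all $m\ge n$, hence for all but finitely many $m$.

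The only delicate point --- and the closest thing to an obstacle --- is matching the strict inequality in statement (1) with the closed sub-level set $S_\varepsilon^f$ appearing in (2): in the $(2)\Rightarrow(1)$ step one only gets $f(p_m)\le f^*+\varepsilon$, with equality possible if $f$ attains exactly the level $f^*+\varepsilon$ at $x_n^i$ and is never improved thereafter. This is immaterial to the way the proposition is invoked later, so I would simply phrase (1) and (2) with the inequalities read consistently (both $\le$). Finally, the projection variant~\eqref{proj} needs no separate argument, since Proposition~\ref{P3.2}, the sole ingredient, is already stated for Algorithm~\ref{Alg1} with projection in force.
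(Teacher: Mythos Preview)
Your argument is essentially identical to the paper's: both directions are read off Proposition~\ref{P3.2} by noting that $p_n$ is an actual agent (so $(1)\Rightarrow(2)$) and that $f(p_m)\le f(p_n)\le f(x_n^i)$ for $m\ge n$ (so $(2)\Rightarrow(1)$). Your observation about the strict/non-strict mismatch is in fact sharper than the paper, which writes $f^*+\varepsilon>f(x_n^i)$ in the $(2)\Rightarrow(1)$ step without comment, tacitly treating $S_\varepsilon^f$ as the open sub-level set; as you note, this is immaterial downstream.
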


\begin{proof}
	Suppose that (1) holds, so that $f(p_n) < f^* + \varepsilon$ for some $n$. Then for the index $j$ satisfying $x^j_n = p_n$, we have $x^j_n \in S_\varepsilon^f$, which implies (2). If (2) holds, then we obtain
	\[
			f^* + \varepsilon > f(x^i_n) \geq f(p_n) \geq f(p_m), \quad \forall m \geq n.
	\]
\end{proof}

Therefore, estimating the probability for global optimization is based on the likelihood that any agent is attracted to the sub-level set $S_\varepsilon^f$. Given $f$, the transition
\[	y_n := (x^1_n,x^2_n,\cdots,x^N_n) \mapsto (x^1_{n+1},x^2_{n+1},\cdots,x^N_{n+1}) =: y_{n+1}\]
is memoryless. Hence, treating $\mathbb{R}^{Nd}$ as a state space, $y_n \mapsto y_{n+1}$ forms a Markov chain for the measurable function $f$. To estimate the probability for optimization, define the set $A^f_\varepsilon \subset \mathbb{R}^{Nd}$ as
\[	y  = (x^1, \cdots, x^N) \in A^f_\varepsilon \quad \Leftrightarrow \quad x^i \in S^f_\varepsilon ~ \text{for some $i \in [N]$}.
\]
Then, since $A^f_\varepsilon$ is an absorbing set from Proposition \ref{P3.2}, for the transition probability kernel $P$ of the chain and the initial measure $\nu$ corresponding to $\rho_{\mathrm{in}}$ we have
\begin{align*}
	&\mathbb{P}[\text{ $x^i_n \in S^f_\varepsilon$ for some $i \in [N], n \in \mathbb{N}$ }] \\
	&\hspace{0.5cm}= \int_{y_0 \in A^f_\varepsilon} \nu(dy_0)
	+ \sum_{n=1}^\infty \int_{y_0 \notin  A^f_\varepsilon} \cdots \int_{y_{n-1} \notin  A^f_\varepsilon}
	\nu(dy_0)P(y_0,dy_1)\cdots P(dy_{n-1},A^f_\varepsilon).
\end{align*}
In principle, this probability can be estimated if we can estimate $P$ and $A_\varepsilon^f$. However, the transition kernel depends on the definition of $p_n$, which in turn relies on the structure of $f$. Since performing this calculation for a general $f$ is expected to be extremely challenging, we extract the parts that do not depend on $P$. Then we obtain
\begin{align*}
	\mathbb{P}[\text{ $x^i_n \in S^f_\varepsilon$ for some $i \in [N], n \in \mathbb{N}$ }]
	\geq \int_{y_0 \in A^f_\varepsilon} \nu(dy_0) = 1 - (1 -  \rho_{\mathrm{in}}(S_\varepsilon^f))^N.
\end{align*}
In other words, we consider a random search problem where at least one of the initial positions is included in $S_\varepsilon^f$. To this end, suppose that for any global minimizer $x^*$, there exist functions $f_m$, $f_M$ satisfying
\begin{align}\label{XX1}
	f_m(x-x^*) \leq f(x) - f(x^*) \leq f_M(x-x^*), \quad \text{with } f_m(0)=f_M(0)=0.
\end{align}
Then that size of $S^f_\varepsilon$ can be estimated:
\begin{align*}
	S_\varepsilon^M(x^*) := \{ x \in S \mid f^M(x-x^*) < \varepsilon \}
	&\subset S_\varepsilon^f := \{ x \in S \mid f(x) < f^* + \varepsilon \} \\
	&\subset S_\varepsilon^m(x^*) := \{ x \in S \mid f^m(x-x^*) < \varepsilon \}.
\end{align*}
This observation leads to the following result.
\begin{proposition}\label{P4.3}
	Let $(x^i_n)$ be a sequence of iterates generated by Algorithm \ref{Alg1}, where $x^i_0 \overset{\mathrm{i.i.d.}}{\sim} \rho_{\mathrm{in}}$, and $S_0^f$ be a set of global minimizers. Suppose that $f$ is continuous on $S$ and its global minimizer exists in the support of $\rho_{\mathrm{in}}$. Then for $f_\infty = \lim_{n \to \infty}f(p_n)$, we have
		\begin{align}\label{X3}
			f_\infty - \min f < \varepsilon,
		\end{align}
		with a probability not less than $1 - (1 -  \rho_{\mathrm{in}}(S_\varepsilon^M))^N$. In particular, if $N \to \infty$, then this probability converges to 1. If $f$ has a non-empty bounded sub-level set, then $\limsup_{n \to \infty}\mathrm{dist}(S_0^f,p_n) \leq \mathrm{diam}(S_\varepsilon^m)$ in case that \eqref{X3} occurs.
\end{proposition}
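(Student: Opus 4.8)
The plan is to derive every part of Proposition \ref{P4.3} from just two facts: the monotonicity of $n\mapsto f(p_n)$ (Proposition \ref{P3.2}) and a one-line independence computation on the initial positions. In effect we keep only the $P$-free term of the Markov-chain decomposition displayed before the statement; this bypasses Proposition \ref{P4.2} and, with it, any need to compare $f^*$ with $\min f$. Fix a global minimizer $x^* \in \mathrm{supp}(\rho_{\mathrm{in}})$ and let $f_m, f_M$ be the functions attached to $x^*$ by \eqref{XX1}. Since $f$ is real-valued and continuous on $S$ and $x^*\in S$, $\min f = f(x^*)$ is finite; as $f(p_n)$ is non-increasing and bounded below by $\min f$, the limit $f_\infty$ exists with $f_\infty \geq \min f$.

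First I would establish the probability bound. Let $E := \{\,x^i_0 \in S^M_\varepsilon(x^*)\text{ for some }i\in[N]\,\}$. If $E$ occurs, pick such an $i$; then \eqref{XX1} gives $f(x^i_0)-f(x^*) \leq f_M(x^i_0-x^*) < \varepsilon$, hence $f(p_0)\leq f(x^i_0) < \min f + \varepsilon$, and monotonicity yields $f_\infty \leq f(p_0) < \min f + \varepsilon$, i.e.\ \eqref{X3} holds on $E$. Since $x^1_0,\ldots,x^N_0$ are i.i.d.\ $\sim\rho_{\mathrm{in}}$, we have $\mathbb{P}[E] = 1-(1-\rho_{\mathrm{in}}(S^M_\varepsilon(x^*)))^N$, which is exactly the claimed lower bound for the probability that \eqref{X3} holds. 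For the limit $N\to\infty$ one needs only $\rho_{\mathrm{in}}(S^M_\varepsilon)>0$; this holds once $f_M$ is chosen continuous at the origin, so that $S^M_\varepsilon(x^*)$ is a relative neighbourhood of $x^*\in\mathrm{supp}(\rho_{\mathrm{in}})$. If one prefers to assume nothing about $f_M$, the same monotonicity argument applied to the open relative sublevel set $U_\varepsilon := \{x\in S : f(x) < \min f + \varepsilon\} \supseteq S^M_\varepsilon(x^*)$, which contains $x^*$ and hence satisfies $\rho_{\mathrm{in}}(U_\varepsilon)>0$ by continuity of $f$, already gives a lower bound $1-(1-\rho_{\mathrm{in}}(U_\varepsilon))^N \to 1$.

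For the last assertion I would work on the event that \eqref{X3} holds. There $f(p_n)\downarrow f_\infty < \min f + \varepsilon$, so $f(p_n) - f(x^*) < \varepsilon$ for all $n\geq N_0$ and some $N_0$; then \eqref{XX1} yields $f_m(p_n-x^*) \leq f(p_n)-f(x^*) < \varepsilon$, and since $p_n\in S$ (Proposition \ref{P3.2}) this says $p_n \in S^m_\varepsilon(x^*)$ for $n\geq N_0$. Moreover $S_0^f \subseteq S^m_\varepsilon(x^*)$, because any global minimizer $y$ satisfies $f_m(y-x^*)\leq f(y)-f(x^*) = 0 < \varepsilon$ by \eqref{XX1} and $y\in S$. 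Hence, for $n\geq N_0$, $\mathrm{dist}(S_0^f,p_n) \leq \|p_n-x^*\| \leq \mathrm{diam}(S^m_\varepsilon(x^*))$, and taking $\limsup$ gives the conclusion. The hypothesis that $f$ has a non-empty bounded sublevel set enters here only to ensure (for a suitable choice of $f_m$, e.g.\ one that is coercive) that $S^m_\varepsilon$ is bounded, so that $\mathrm{diam}(S^m_\varepsilon)<\infty$ and the estimate is non-vacuous; it also makes $\min f$ attained via compactness if one does not want to rely on the standing hypothesis.

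The whole argument is bookkeeping once Proposition \ref{P3.2} and the defining inequalities \eqref{XX1} are in hand, so I do not expect a genuine obstacle. The one point that requires care is the passage $N\to\infty$: one must make sure that the set whose $N$-fold complementary probability appears actually carries positive $\rho_{\mathrm{in}}$-mass, which is handled above either by a harmless regularity assumption on $f_M$ or by replacing $S^M_\varepsilon$ with the honest open sublevel set around $x^*$.
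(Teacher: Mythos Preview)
Your proposal is correct and follows essentially the same route as the paper: the paper's ``proof'' is the discussion immediately preceding Proposition~\ref{P4.3}, which extracts the $P$-free (i.e., $n=0$) term of the Markov-chain decomposition to get $1-(1-\rho_{\mathrm{in}}(S^f_\varepsilon))^N$ and then invokes the inclusions $S^M_\varepsilon \subset S^f_\varepsilon \subset S^m_\varepsilon$ from \eqref{XX1}. Your write-up is a bit cleaner in that you work directly with $\min f$ rather than routing through $f^*$ and Proposition~\ref{P4.2}, and you are explicit about why $\rho_{\mathrm{in}}(S^M_\varepsilon)>0$ is needed for the $N\to\infty$ statement, a point the paper leaves tacit.
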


\begin{remark}
	In the mean-field CBO models, the existence of $f_m$, and sometimes $f_M$, is commonly assumed for convergence analysis. For instance, \cite{FKR} takes $f_M = K_1(1+\|x-x^*\|^{K_2})\|x-x^*\|$ and $f_m(x) = K_3\|x\|^{1/K_4}$ in the vicinity of the global minimizer for $K_1, K_3, K_4 > 0$ and $K_2 \geq 0$, resulting in a {\L}ojasiewicz-type inequality:
\[
	K_3\|x-x^*\|^{1/K_4} \leq f(x) - f(x^*) \leq K_1(1+\|x-x^*\|^{K_2})\|x-x^*\|.
\]
\end{remark}

In other word, if information about $f_m$ and $f_M$ are given, then a lower bound of the probability of global optimization can be obtained in terms of the initial position. If we have a quantitative measure of the continuity of $f$, then the size of $S_\varepsilon^f$ can be stated more explicitly.

\begin{definition}[Modulus of continuity]
	A real-valued function $f$ admits a \emph{local modulus of continuity} $k : [0,\infty] \to [0,\infty]$ at $x$ if $k$ is increasing and
	\begin{align*}\label{mod}
		|f(x) - f(y)| \leq k(\|x-y\|) \quad \text{and} \quad \lim_{x \to 0}k(x) = k(0) = 0,
	\end{align*}
	for all $y \in \mathrm{dom}(f)$. In this case, $f$ is called $k$-continuous at $x$. If $k$ can be chosen independently of $x$, then $f$ admits a \emph{modulus of continuity} $k$ and $f$ is called $k$-continuous.
\end{definition}

Therefore, $f_M$ can be specified from $k$ if $f_M$ is $k$-continuous at $x^*$, where $k$ depends on the regularity of $f$. 

\begin{example}\label{E1} If $f$(resp. $g$) is $k$(resp. $k'$)-continuous, then $f \circ g$ and $\alpha f + \beta g$ are $(k \circ k')$-continuous and $(|\alpha|k+|\beta|k')$-continuous respectively, where $\alpha$ and $\beta$ are constants. Therefore, the Ackley function
\[
	F_{Ack}(x) := -a\exp\left(-b\|x\|/\sqrt{d}\right)-\exp\left(\sum_{i=1}^{d}\cos(cx_i)/d\right)+a+\exp(1)
\] 
has the following modulus of continuity $k_{Ack}$:
		\[
			k_{Ack}(t) = a( 1 - \exp(-bt/\sqrt{d})  ) + \exp(1)\left(1 - \exp(-ct/\sqrt{d}) \right).
		\]
		Since $F_{Ack}$ has the global minimum at the origin $x^*=0$, one can choose  $f_M(x)$ as $k_{Ack}(\|x\|)$. If we draw each $x^i_0$ from $\mathrm{Unif}([-L,L]^d)$, then with a probability at least
\begin{align*}
	1 - \left( 1 - \frac{\pi^{d/2}(k^{-1}_{Ack}(\varepsilon))}{L^d\Gamma(d/2+1)}\right)^N,
\end{align*}
we have $f_\infty - f(x^*) < \varepsilon$ for small $\varepsilon>0$. 
Then, choosing $f_m := -a\exp\left(-b\|x\|/\sqrt{d}\right)+a$ yields
\[
	\limsup_{n \to \infty}\| x^* - p_n \| < 2{\sqrt{d}}/{b}\log({a}/{(a-\varepsilon)}).
\] 
\end{example}

 \subsection{DCBO with restart} 
Consider the scenario where we have a candidate for a local minimizer, say $p$. Proposition \ref{P3.2} asserts that if Algorithm \ref{Alg1} is run with the initial positions of the agents including $p$, then the resulting sequence $(p_n)$ is no worse than $p$ in minimizing $f$. This inspires the application of Algorithm \ref{Alg1} with restart. The efficacy of restarted DCBO is supported by the following proposition.

\begin{proposition}\label{P4.7}
    Let $f$ be any Borel measurable function. For each $m \in \mathbb{N}$, choose any $M_m \in \mathbb{N}$. Apply Algorithm \ref{Alg1} repeatedly as follows:
    \begin{enumerate}
    \item Draw $x^i_0 \overset{\mathrm{i.i.d.}}{\sim} \rho_{\mathrm{in}}$ for $i \in [N]$.
    \item Define $p^m$ as follows.
    \begin{enumerate}
        \item[(a)] If $m=1$, then $p^m = x^i_0$.
        \item[(b)] If $m \neq 1$, then set $p^m = p_{M_m}$, where $p_{M_m}$ is a copy of $p_n$ at the $M_m$-th iteration in the $(m-1)$th round.
    \end{enumerate}
    \item Run the $m$th round of Algorithm \ref{Alg1} with initial positions $(p^m,x^2_0,\ldots,x^N_0)$.
    \end{enumerate}
    Then, we have
    \begin{align*}
        &f(p^0) \geq f(p^1) \geq f(p^2) \geq \cdots \geq f(p^n) \geq \cdots, \text{ and} \\
        &\lim_{n \to \infty} f(p^n) \leq \essinf_{x \in \mathrm{supp}(\rho_\mathrm{in})} f(x) =: f^*_{\rho_\mathrm{in}},
    \end{align*}
    almost surely. If we further assume that 
    \begin{enumerate}
    	\item $f$ is continuous on $S$,
        \item $f$ attains its global minimum, and
        \item $\mathrm{supp}(\rho_{\mathrm{in}}) = S$,
    \end{enumerate}
    then $\lim_{n \to \infty} f(p^n) = \min f$ almost surely.
\end{proposition}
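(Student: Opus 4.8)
The plan is to combine the monotonicity already recorded in Proposition~\ref{P3.2} with a second Borel--Cantelli argument run over the restart rounds. Concretely I would proceed in three steps: (i) the sequence $(f(p^m))_m$ is non-increasing; (ii) almost surely $\lim_m f(p^m)\le f^*_{\rho_{\mathrm{in}}}$; (iii) under the extra hypotheses $f^*_{\rho_{\mathrm{in}}}=\min f$, and the reverse inequality $f(p^m)\ge\min f$ is free. For step (i): the $(m-1)$-th round starts from the configuration $(p^{m-1},x^2_0,\ldots,x^N_0)$, so its current-best point at iteration $0$ has $f$-value at most $f(p^{m-1})$; Proposition~\ref{P3.2} applied inside that round then gives $f(p^m)=f(p_{M_m})\le f(p_0)\le f(p^{m-1})$, where $p_\cdot$ is the current-best sequence of round $m-1$. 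Hence $(f(p^m))_m$ decreases and its limit exists in $[-\infty,\infty)$; moreover every $p^m\in S$ by Proposition~\ref{P3.2}, so $f(p^m)\ge\inf_S f$, and $f(p^m)\ge\min f$ whenever the minimum is attained. This already gives the first displayed chain.

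For step (ii), fix $\varepsilon>0$ and set $U_\varepsilon:=\{x\in S:f(x)<f^*_{\rho_{\mathrm{in}}}+\varepsilon\}$, replacing the threshold by $-\varepsilon^{-1}$ when $f^*_{\rho_{\mathrm{in}}}=-\infty$. By the definition of the essential infimum with respect to $\rho_{\mathrm{in}}$, the number $q_\varepsilon:=\rho_{\mathrm{in}}(U_\varepsilon)$ is strictly positive. Let $E_m$ denote the event that at least one of the fresh $\rho_{\mathrm{in}}$-draws used to initialize round $m$ lies in $U_\varepsilon$; then $\mathbb{P}[E_m]\ge 1-(1-q_\varepsilon)^{N-1}>0$ for every $m$, and since the draws initializing distinct rounds are mutually independent and independent of all the noises, the events $(E_m)_m$ are independent, with $\sum_m\mathbb{P}[E_m]=\infty$. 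The second Borel--Cantelli lemma then forces $E_m$ to occur for infinitely many $m$ almost surely. On $E_m$ the best initial point $p_0$ of round $m$ satisfies $f(p_0)<f^*_{\rho_{\mathrm{in}}}+\varepsilon$, hence $f(p^{m+1})\le f(p_0)<f^*_{\rho_{\mathrm{in}}}+\varepsilon$ by step (i); combined with the monotonicity of $(f(p^m))_m$ this yields $\lim_m f(p^m)\le f^*_{\rho_{\mathrm{in}}}+\varepsilon$ a.s., and intersecting over $\varepsilon=1/k$, $k\in\mathbb{N}$, gives $\lim_m f(p^m)\le f^*_{\rho_{\mathrm{in}}}$ almost surely.

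For step (iii), assume $f$ is continuous on $S$, attains its minimum at some $x^*$, and $\mathrm{supp}(\rho_{\mathrm{in}})=S$. Trivially $f^*_{\rho_{\mathrm{in}}}\ge\min f$ because $f\ge\min f$ on $S$. For the reverse, fix $\varepsilon>0$: by continuity $\{x\in S:f(x)<\min f+\varepsilon\}$ is relatively open in $S$ and contains $x^*\in\mathrm{supp}(\rho_{\mathrm{in}})$, hence has positive $\rho_{\mathrm{in}}$-measure, so $f^*_{\rho_{\mathrm{in}}}\le\min f+\varepsilon$; letting $\varepsilon\downarrow0$ gives $f^*_{\rho_{\mathrm{in}}}=\min f$. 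Together with step (ii) and the lower bound $f(p^m)\ge\min f$ from step (i), this gives $\lim_m f(p^m)=\min f$ almost surely.

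The one point requiring genuine care is the probabilistic bookkeeping behind the Borel--Cantelli step: the probability space must be arranged so that the agents re-initialized at the start of each round are truly fresh $\rho_{\mathrm{in}}$-samples independent of the $\sigma$-algebra generated by all earlier rounds' draws and noises, so that $(E_m)_m$ are independent; the fact that each $M_m$ is chosen in advance (not adaptively) is what keeps this free of stopping-time subtleties. Everything else is just iterated application of Proposition~\ref{P3.2}. Minor additional care is needed for the degenerate case $f^*_{\rho_{\mathrm{in}}}=-\infty$, handled by the alternative sub-level set above, and one should note that the restart injects no new randomness when $N=1$ (a lone agent is stationary), so the statement is of interest in the regime $N\ge2$.
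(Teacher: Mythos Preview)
Your proposal is correct and follows essentially the same approach as the paper: monotonicity from Proposition~\ref{P3.2}, then a Borel--Cantelli-type argument over the independent fresh draws across rounds to force $\lim_m f(p^m)\le f^*_{\rho_{\mathrm{in}}}+\varepsilon$ for every $\varepsilon>0$, and finally identifying $f^*_{\rho_{\mathrm{in}}}=\min f$ under the extra hypotheses. Your version is in fact more carefully written than the paper's---you make the independence structure and the second Borel--Cantelli step explicit, spell out the argument for $f^*_{\rho_{\mathrm{in}}}=\min f$, and flag the $N=1$ degeneracy---but the underlying idea is identical.
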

\begin{proof}
    From Proposition \ref{P3.2}, $f(p^n)$ is monotonically decreasing in $n$, and its limit always exists in $\bbr \cup \{-\infty\}$. First, assume $f^*_{\rho_\mathrm{in}} \in \bbr$. Then for any positive constant $\varepsilon > 0$, we have $\rho_\mathrm{in}( \{ x \mid f(x) \leq f^*_{\rho_\mathrm{in}} + \varepsilon \} ) > 0$. Therefore, with probability 1, there exists $m' \in \mathbb{N}$ such that $x^i_0 \in \{ x \mid f(x) \leq f^*_{\rho_\mathrm{in}} + \varepsilon \}$ for some $i$ at the $m'$-th round. Thus,
    \[
        \lim_{n \to \infty} f(p^n) \leq f^*_{\rho_{\mathrm{in}}} + \varepsilon, \quad \text{a.s.}
    \]
    Since the choice of $\varepsilon$ is arbitrary, we obtain the desired result. When $f^*_{\rho_\mathrm{in}} = - \infty$, the proof is similar if we replace $\{ x \mid f(x) \leq f^*_{\rho_\mathrm{in}} + \varepsilon \}$ to $\{ x \in S \mid f(x) < -\varepsilon^{-1} \}$. In particular, when $f$ satisfies the additional assumptions, we have $f^*_{\rho_\mathrm{in}} = \min f \in \bbr$.
\end{proof}

\begin{remark}\label{R4.8}
To demonstrate the result in Proposition \ref{P4.7}, we optimized the $100$-dimensional Ackley function through 30 rounds of Algorithm using $N=50$ agents. The objective function has the global minimum of $0$ at the origin. Fig. \ref{fig:1a} illustrates the monotonic decrease of $f(p_n)$, and global optimization is achieved within a sufficient number of restarts. Furthermore, Fig. \ref{fig1} presents some insights on the convergence rate. Recall that the proof of Proposition~\ref{P3.4} suggests that $\|x^i - p^i\|$ (and hence $\|x^i-x^j\|$) can be controlled by $\sum_{n} \|p_n - p_{n+1}\|$. In Fig. \ref{fig:1b}, the number of iterations are correlated strongly with the sum $\sum \|p_{n+1}-p_n\|$ and the algorithm continue if some agent updates $p_n$ and accumulates $\sum \|p_{n+1}-p_n\|$. We observe a significant drop in the iteration count when $p_n$ approaches the global minimum and when the accumulation of $\sum \|p_{n+1}-p_n\|$ ceases. Consequently, the convergence speed of our algorithm heavily relies on how early an agent can find a point sufficiently close to the optimum. For instance, in Fig. \ref{fig:1b}, a suitable $p_n$ was determined in the 23rd round, leading to a rapid progression of the algorithm with fewer iterations afterward.

\begin{figure}[h]
\hspace{.4cm}
	\subfloat[Best function value $f(p_n)$]{\label{fig:1a}\includegraphics[scale=0.388]{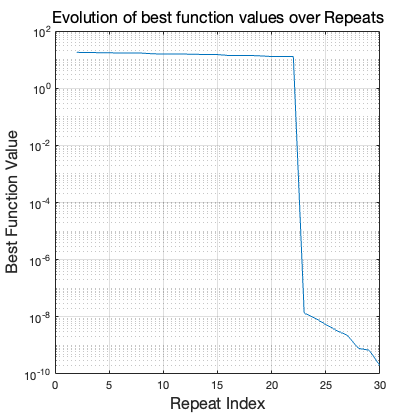}}
	\hfill
	\subfloat[The Number of iteration $I$ and $\sum_{n=1}^I \|p_n-p_{n-1}\|$]{\label{fig:1b}\includegraphics[scale=0.388]{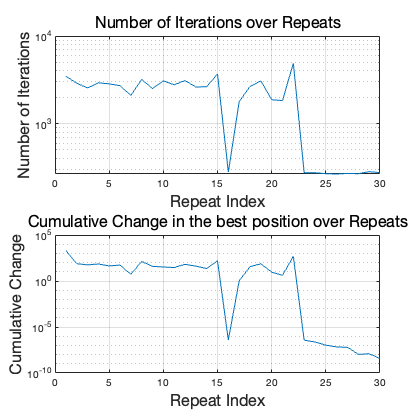}}
    \caption{DCBO with restart on 100-dimensional Ackley function. The repeat index counts the round number. Fig. \ref{fig:1a} depicts the value of the $f(p_n)$ at the last iteration for each repeat index. The top graph in Fig. \ref{fig:1b} represents the number of iterations in each repeat index, and the bottom graph displays the sum $\sum_{n}\|p_{n+1}-p_n\|$.}
    \label{fig1}
\end{figure}


\end{remark}

\section{Numerical experiments}\label{sec:5}
\setcounter{equation}{0}
In this section, we apply Algorithm \ref{Alg1} to a variety of problems and compare its performance with other methods including conventional CBO and the particle swarm optimization (PSO) models. The latter is another popular meta-heuristic method utilizing dynamics of multiple agents. The primary objective of this section is to see if Algorithm \ref{Alg1} is applicable to a variety of problems, relatively immune to various factors like dimensions, convexity and gradients of the objective function. Throughout the section, we use a mixture of diffusions; we pose $F^i = F_1$ for  $i \leq \lfloor N/2 \rfloor$, and $F^j = F_2$ for  $j > \lfloor N/2 \rfloor$.



\subsection{Benchmark}

We applied Algorithm \ref{Alg1} to various benchmark functions. We tested a total of 102 benchmark functions,\footnote{Available at https://www.sfu.ca/{\texttildelow}ssurjano/index.html} where the dimension of the inputs ranges from 2 to 80. For each function, we conducted 100 runs of \eqref{DCBO} and recorded the results. We used $\gamma^1 = 0.5, \gamma^2 = 1, \bar{\gamma}^1 = 0.4, \bar{\gamma}^2 = 0.7$. Note that although $(\bar{\gamma}^1,\bar{\gamma}^2)=(0.4,0.7)$ is not covered by \eqref{B1}, it is justified by \cite{FKR}, and indeed, we observed that consensus always occured. We compare the performance of Algorithm \ref{Alg1} with and without introducing restart against PSO and hmPSO \cite{CKTW}, respectively. The latter is known to enhance the performance of PSO by introducing appropriate heterogeneous perturbation. For the plain PSO and hmPSO, we used parameters $w = 0.729, c_1=c_2=1.5$, and for stochastic perturbation, we employed independent normal variates with mean 0 and standard deviation 0.005, as specified in \cite{CKTW}. Note that each benchmark problem can be solved with a high success rate by increasing the number of particles; however, our goal is to evaluate the relative performance of CBO, focusing on how it operates under limited resources rather than solving each problem perfectly.

\subsubsection{Experiment 1: Comparison between DCBO without restart and PSO}

We compare the performance of Algorithm \ref{Alg1} without restart to that of the plain PSO algorithm. For each objective function $f$ defined on a subset of $\bbr^d$, we executed up to $500 \times d$ iterations. We terminated DCBO when $\max_{i<j}\|x^i_n-x^j_n\|$ was less than $10^{-7}$. Both DCBO and PSO algorithms exhibited significantly different iteration counts depending on the objective function. In fact, the number of iterations differed significantly even for the same objective function (see Remark \ref{R4.8}). Thus it is difficult to establish a perfect parity in terms of iteration counts. However, since the main objective of this experiment is to verify the competitiveness of the CBO algorithm relative to PSO, the termination criteria for the PSO algorithm were set to be more stringent than conventional standards: we terminated PSO if the change in the best function value was less than $10^{-10}$ for 200 consecutive iterations before reaching the imposed maximum iteration count of $500 \times d$. This enables PSO to execute a greater number of iterations in the majority of cases. Specifically, for $N=50,100$, and $200$, the average number of iterations was recorded, and PSO had a higher count for 61, 63, and 68 functions, respectively.

\subsubsection{Experiment 2: ~ Comparison between DCBO with restart and hmPSO}

We compare the performance of the Algorithm \ref{Alg1} with restart to that of the hmPSO. For each objective function $f$ defined on a subset of $\bbr^d$, we consistently executed $500 \times d$ iterations. For the CBO, if uniform consensus did not occur within $100 \times d$ iterations, the algorithm was forced to restart to ensure that it was restarted at least 5 times. In contrast to Experiment 1, a uniform number of iterations is ensured.

\begin{table}[h]
\centering
\footnotesize
{
\begin{tabularx}{\textwidth}{l| >{\hsize=.85\hsize}X >{\hsize=1.2\hsize}X >{\hsize=.95\hsize}X| >{\hsize=.85\hsize}X >{\hsize=1.25\hsize}X >{\hsize=.9\hsize}X| >{\hsize=.85\hsize}X >{\hsize=1.25\hsize}X >{\hsize=.9\hsize}X}
\toprule
\textbf{$\#$ of agnets} & \textbf{} & $N$=50& \textbf{} & \textbf{} & $N$=100 & \textbf{} & \textbf{} & $N$=200 & \textbf{} \\
\midrule
\textbf{Statistics} & \textbf{Min} & \textbf{Mean} & \textbf{Med} & \textbf{Min} & \textbf{Mean} & \textbf{Med} & \textbf{Min} & \textbf{Mean} & \textbf{Med} \\
\midrule
DCBO $>$ PSO & 19 & 49 & 34 & 19 & 42 & 33 & 18 & 40 & 32 \\
DCBO $=$ PSO & 70 & 22 & 51 & 75 & 34 & 55 & 75 & 39 & 56 \\
DCBO $<$ PSO & 13 & 31 & 17 & 8 & 26 & 14 & 9 & 23 & 14 \\
\bottomrule
\end{tabularx}
}
\caption{Comparison between DCBO without restart and PSO.}
\label{tab1}
\end{table}

\begin{table}[h]
\centering
\footnotesize
{
\begin{tabularx}{\textwidth}{l| >{\hsize=.85\hsize}X >{\hsize=1.2\hsize}X >{\hsize=.95\hsize}X| >{\hsize=.85\hsize}X >{\hsize=1.25\hsize}X >{\hsize=.9\hsize}X| >{\hsize=.85\hsize}X >{\hsize=1.25\hsize}X >{\hsize=.9\hsize}X}
\toprule
\textbf{$\#$ of agnets} & \textbf{} & $N$=50& \textbf{} & \textbf{} & $N$=100 & \textbf{} & \textbf{} & $N$=200 & \textbf{} \\
\midrule
\textbf{Statistics} & \textbf{Min} & \textbf{Mean} & \textbf{Med} & \textbf{Min} & \textbf{Mean} & \textbf{Med} & \textbf{Min} & \textbf{Mean} & \textbf{Med} \\
\midrule
DCBO $>$ PSO & 21 & 41 & 25 & 17 & 39 & 25 & 17 & 38 & 26 \\
DCBO $=$ PSO & 74 & 52 & 67 & 80 & 57 & 70 & 83 & 60 & 72\\
DCBO $<$ PSO & 7 & 9 & 10 & 5 & 6 & 12 & 2 & 4 & 4 \\
\bottomrule
\end{tabularx}
}
\caption{Comparison between DCBO with restart and hmPSO.}
\label{tab2}
\end{table}

\begin{table}[h]
\centering
\footnotesize 
\begin{tabularx}{\textwidth}{@{}l>{\hsize=.7\hsize}X|XXX|>{\hsize=1.05\hsize}X>{\hsize=1.05\hsize}X>{\hsize=1.05\hsize}X@{}}
\toprule
\multicolumn{2}{c}{\textbf{}}  & \multicolumn{3}{c}{\textbf{DCBO without restart}} & \multicolumn{3}{c}{\textbf{PSO}} \\
\midrule
\textbf{} & \textbf{} & \textbf{$N=50$} & \textbf{$N=100$} & \textbf{$N=200$} & \textbf{$N=50$} & \textbf{$N=100$} & \textbf{$N=200$}\\
\midrule
Ackley & Min & 0 & 0 & 0 & 4.911 & 2.351 & 1.155 \\
 & Mean & 4.504 & 2.145 & 1.064 & 15.73 & 13.57 & 11.35 \\
 & Med & 0 & 0 & 0 & 16.03 & 14.55 & 13.02 \\
 & Iter & 3577 & 2983 & 2699 & 2875 & 2146 & 1733 \\
\midrule
Griewank & Min & 0 & 0 & 0 & 0.03687 & 0 & 0 \\
~ & Mean & 0.004187 & 0.004656 & 0.003203 & 201.5 & 182.4 & 153.5 \\
~ & Med & 0 & 0 & 0 & 180.9 & 180.7 & 180.1 \\
~ & Iter & 3617 & 3160 & 2819 & 3230 & 2141 & 1627 \\
\midrule
Rastrigin & Min & 201.0 & 84.57 & 25.87 & 378.3 & 354.4 & 317.5 \\
~ & Mean & 351.2 & 211.6 & 93.40 & 551.8 & 511.0 & 486.9 \\
~ & Med & 345.7 & 205.5 & 91.04 & 544.7 & 512.8 & 494.9 \\
~ & Iter & 2988 & 2994 & 2893 & 2789 & 2010 & 1573 \\
\midrule
Trid & Min & 21.25 & 1.692 & 2.307 & 5214000 & 9948000 & 2640000 \\
~ & Mean & 15690 & 9501 & 4380 & 60390000 & 67600000 & 63480000 \\
~ & Med & 13230 & 6717 & 3285 & 54130000 & 64370000 & 54240000 \\
~ & Iter & 40000 & 40000 & 40000 & 22210 & 18430 & 18890 \\
\midrule
Zakharov & Min & 0 & 0 & 0 & 104.3 & 25.03 & 0 \\
~ & Mean & 0 & 0 & 0 & 649.0 & 446.1 & 285.3 \\
~ & Med & 0 & 0 & 0 & 602.0 & 379.8 & 257.6 \\
~ & Iter & 7452 & 5626 & 4578 & 34770 & 31760 & 29270 \\
\midrule
Rosenbrock & Min & 3.998 & 0.03853 & 0.008562 & 166600 & 224900 & 55520 \\
~ & Mean & 52.72 & 44.00 & 36.18 & 689200 & 645100 & 620400 \\
~ & Med & 50.29 & 43.92 & 36.65 & 669000 & 637900 & 626000 \\
~ & Iter & 40000 & 40000 & 40000 & 12250 & 8827 & 13140 \\
\midrule
Powell & Min & 0.000017 & 0.000005 & 0.000001 & 551.4 & 716.9 & 482.6 \\
~ & Mean & 0.000023 & 0.000006 & 0.000002 & 8958 & 8724 & 7852 \\
~ & Med & 0.000024 & 0.000006 & 0.000002 & 8210 & 8472 & 7847 \\
~ & Iter & 40000 & 40000 & 40000 & 19340 & 18680 & 18570 \\
\midrule
Styblinski-Tang & Min & 212.0 & 127.2 & 28.26 & 339.3 & 282.7 & 296.9 \\
~ & Mean & 337.9 & 251.6 & 144.6 & 472.2 & 469.9 & 452.2 \\
~ & Med & 339.3 & 254.4 & 141.4 & 466.5 & 466.5 & 452.4 \\
~ & Iter & 2509 & 2384 & 2289 & 3151 & 2042 & 1547 \\
\bottomrule
\end{tabularx}
\caption{Comparison of high-dimensional ($d=80$) functions up to 4 significant digits. For each function, we recorded statistics for $f_\infty - \min f$ and the mean number of iterations. Each $f_\infty$ is obtained from the corresponding algorithm.}\label{tab3}
\end{table}

\begin{table}[h]
\centering
\footnotesize 
\begin{tabularx}{\textwidth}{@{}lX|XXX|XXX@{}}
\toprule
\multicolumn{2}{c}{\textbf{}}  & \multicolumn{3}{c}{\textbf{DCBO with restart}} & \multicolumn{3}{c}{\textbf{hmPSO}} \\
\midrule
\textbf{} & \textbf{} & \textbf{$N=50$} & \textbf{$N=100$} & \textbf{$N=200$} & \textbf{$N=50$} & \textbf{$N=100$} & \textbf{$N=200$}\\
\midrule
Ackley & Min & 0 & 0 & 0 & 1.646 & 1.228 & 0 \\
~ & Mean & 3.332 & 1.246 & 0.1691 & 12.34 & 6.639 & 0.9192 \\
~ & Median & 0 & 0 & 0 & 15.09 & 3.001 & 0 \\
\midrule
Griewank & Min & 0 & 0 & 0 & 0 & 0 & 0 \\
~ & Mean & 0.004409 & 0.004826 & 0.003868 & 0.007878 & 0.005906 & 0.01044 \\
~ & Median & 0 & 0 & 0 & 0 & 0 & 0 \\
\midrule
Rastrigin & Min & 35.82 & 1.990 & 0 & 390.2 & 366.1 & 304.6 \\
~ & Mean & 149.9 & 51.63 & 10.80 & 545.6 & 514.5 & 511.2 \\
~ & Median & 146.8 & 55.22 & 2.985 & 552.5 & 513.7 & 505.2 \\
\midrule
Trid & Min & 211.8 & 15.31 & 0.4107 & 506.7 & 420.0 & 1513 \\
~ & Mean & 17340 & 8900 & 5188 & 144200 & 160800 & 151400 \\
~ & Median & 13780 & 7456 & 4121 & 71910 & 81080 & 73210 \\
\midrule
Zakharov & Min & 0 & 0 & 0 & 0 & 0 & 0 \\
~ & Mean & 0 & 0 & 0 & 0 & 0 & 0 \\
~ & Median & 0 & 0 & 0 & 0 & 0 & 0 \\
\midrule
Rosenbrock & Min & 17.63 & 0.01050 & 0.009896 & 0.000037 & 0 & 0 \\
~ & Mean & 55.79 & 49.62 & 34.57 & 135.4 & 96.15 & 98.46 \\
~ & Median & 50.52 & 44.10 & 36.65 & 28.46 & 15.19 & 14.77 \\
\midrule
Powell & Min & 0.000019 & 0.000004 & 0.000001 & 118.8 & 43.65 & 75.18 \\
~ & Mean & 0.000025 & 0.000006 & 0.000002 & 352.1 & 394.0 & 413.3 \\
~ & Median & 0.000024 & 0.000006 & 0.000002 & 325.4 & 393.0 & 418.1 \\
\midrule
Styblinski-Tang & Min & 0 & 0 & 0 & 339.0 & 254.2 & 296.6 \\
~ & Mean & 68.71 & 5.079 & 0 & 459.5 & 441.1 & 447.0 \\
~ & Median & 70.39 & 0 & 0 & 452.1 & 437.9 & 452.1 \\
\bottomrule
\end{tabularx}
\caption{Comparison of high dimensional ($d=80$) functions with 40,000 iterations up to 4 significant digits. For each function, we recorded statistics for $f_\infty-\min f$, where $f_\infty$ is obtained from the corresponding algorithm.}\label{tab4}
\end{table}

\subsubsection{Results}
Tables \ref{tab1} and \ref{tab2} record which algorithm produced better results for each statistic (minimum, mean, median), rounded to six decimal places. If DCBO outperformed PSO, it is denoted as DCBO $>$ PSO. The other comparisons are analogously denoted. The cases DCBO $=$ PSO often corresponded to those in which both algorithms reached global optimum up to six decimal places. Overall, especially for high-dimensional functions, DCBO without restart (resp. with restart) performed better than plain PSO (resp. hmPSO), and DCBO with restart (resp. hmPSO) outperformed DCBO without restart (resp. PSO). Tables \ref{tab3} and \ref{tab4} present statistics and average iteration counts for several 80-dimensional functions. From Table \ref{tab3}, two observations regarding iteration counts emerge. First, for functions where global minimum was attained frequently (Ackley, Griewank, Zakharov), the algorithm terminates with fewer iterations as the number of agents $N$ increases. This trend likely stems from an early detection of a nearly optimal point (i.e., the emergence of $p_n \in S_\varepsilon^f$ for small $n$ and $\varepsilon$), as detailed in Remark \ref{R4.8}. As $N$ increases, $p_n$ has a higher probability of detecting a global minimizer, leading to faster convergence. Second, for some functions (Trid, Rosenbrock, Powell), a uniform consensus was not reached even after the maximum number of iterations (40,000). This tendency is attributed to the objective function being too flat to vanish $\|p_{n+1}-p_n\|$ in a short period. Additionally, while increasing $N$ in hmPSO did not always guarantee improved performance, DCBO with or without restart, typically achieved better outcomes with a higher value of $N$.


\subsection{Portfolio optimization}

We address the portfolio optimization problem and compare the results with those obtained using a conventional CBO model proposed in \cite{BHKLMY}. Based on Markowitz's portfolio theory \cite{Mar}, the goal here is to select an optimal portfolio weights $w = (w^1,\ldots,w^d)$ by minimizing the negative Sharpe ratio of the portfolio:
\begin{align}\begin{aligned}\label{Sharpe}
	\min_{w \in S} -\frac{w^\top \mu}{\sqrt{w^\top \Sigma w}}, ~
	S := \Big\{ w = (w^1,\ldots,w^d) \in \mathbb{R}^d \mid \sum_{i=1}^d w^i = 1, ~ w^i \geq 0, ~ i \in [d] \Big\},
\end{aligned}\end{align}
where $w$ and $\Sigma$ represent the mean vector and covariance matrix of the risky assets. Note that the domain $S$ is convex, and we apply $\mathcal{P}_S$ in Algorithm \ref{Alg1}. 

\indent We reused the Python code for \cite{BHKLMY}, given by the courtesy of the authors. Our dataset comprised six assets: Apple Inc., Microsoft Inc., Starbucks Inc., Tesla Inc., German Deutsche Bank, and gold, covering the period from January 2019 to November 2020. For comparison, we employed the discretized CBO model in \cite{BHKLMY}; used anisotropic and homogeneous noises and parameters $h=0.01, N=100, \lambda=0.5$, and $\sigma = 1$. To evaluate the impact of the $\beta$, we varied $\beta$ across $10, 10^2, 10^4$, and $10^6$. For DCBO, we maintained $N=100$ and used the parameters $(\gamma^1, \gamma^2, \bar{\gamma}^1, \bar{\gamma}^2) = (0.5,1,0.4,0.7)$ consistently. Each algorithm terminated when the maximum of $\|w^i_n-\bar{w}_n\|$ or $\|w^i_n-p_n\|$ fell below $10^{-5}$.

\begin{table}[h]
\footnotesize
\begin{tabular}{l|p{1.3cm}|p{1.3cm}|p{1.3cm}|p{1.3cm}|p{1.3cm}|p{1.3cm}}
\toprule
~ & $\beta = 10$ & $\beta = 10^2$ & $\beta = 10^4$ & $\beta = 10^6$ & DCBO & Glob. min\\
\midrule
Iteration & 1892.98 & 1925.71 & 1908.69 & 1896.54 & 74.29 & ~\\
Function value & -1.80051 & -1.95519 & -1.98411 & -1.98450 & -2.01450 & -2.01450\\
Distance & 0.339658 & 0.145540 & 0.109392 & 0.110149 & 0.000016 & ~\\
\bottomrule
\end{tabular}
\caption{Comparison between DCBO and anisotropic CBO with homogeneous noises.}
\label{portTable}
\end{table}

Table \ref{portTable} records the iteration count, function value, and distance from the global minimum $w^*$, which was computed by the sequential least squares programming (SLSQP) in Python. These values were averaged over 100 runs. As expected from Section \ref{sec:1}, the performance of the discretized CBO improved with increasing $\beta$ until the numerical precision was hit. Nevertheless, DCBO consistently outperformed the discretized CBO for all tested $\beta$ values, requiring significantly fewer iterations. This efficiency is attributed to the early detection of the global minimizer by using the heterogeneous noises and har-min operation, especially because problem \eqref{Sharpe} is moderately complex. Interestingly, DCBO's average function value was marginally lower than $-{(w^*)^\top \mu}/{\sqrt{(w^*)^\top \Sigma (w^*)}}$, while still adhering to the constraint $p_n \in S$. Hence, DCBO appears to be a viable alternative to SLSQP.

\subsection{Neural networks}

We explore a well-established high-dimensional problem in machine learning: training a convolutional neural network (CNN) classifier for the MNIST dataset of handwritten digits \cite{LCB}. This experiment follows the methodology outlined in \cite{R}, where the CNN architecture, parameterized by 2112 variables, is optimized using mini-batch techniques \cite{CJLZ,FHPS2}.\footnote{The code is available at https://github.com/KonstantinRiedl/CBOGlobalConvergenceAnalysis.} In \cite{R}, the authors compared a conventional CBO model with an enhanced CBO model incorporating memory effects and gradient information. Unfortunately we could not replicate the results from the code and setup provided in \cite{R} for the latter model. Hence our comparison is limited with the conventional CBO. We adhered to the same architecture and mini-batch strategy as in \cite{R}, modifying only the update rule to implement DCBO. The parameters used were $(\gamma_1,\gamma_2,\bar{\gamma}_1,\bar{\gamma}_2)=(0.5,0.5,0.4,0.5)$.

\begin{figure}[h]
\hspace{.4cm}
	\subfloat[Performance in 100 epochs]{\includegraphics[scale=0.36]{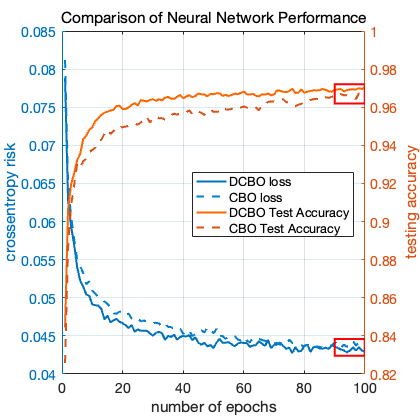}} \hspace{.3cm}
	\subfloat[Performance in the last 10 epochs]{\includegraphics[scale=0.36]{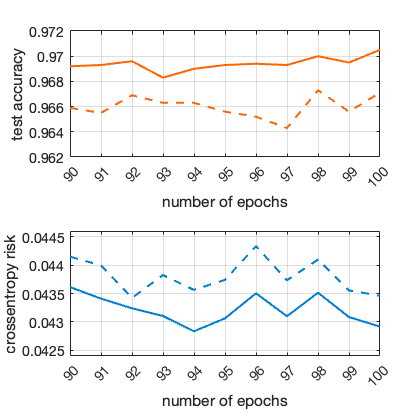}}
    \caption{MNIST Classification.}
    \label{fig7}
\end{figure}

Fig. \ref{fig7} displays the test accuracy and empirical risk for the CNN. Ultimately, DCBO and CBO achieved test accuracies of 0.9705 and 0.9671, and cross-entropy risks of 0.0429 and 0.0435, respectively. As seen in Fig. \ref{fig7}, DCBO demonstrated rapid performance improvements with fewer epochs and consistently outperformed the conventional CBO over the long term. Riedl \cite{R} employed a cooling strategy in each epoch to enhance performance: doubling $\beta$ and adjusting the diffusion parameter $\sigma$ to $\sigma/\log_2(E+2)$ in the $E$-th epoch. In contrast, DCBO achieved superior performance to the conventional CBO without the need for annealing. Although a direct comparison was not feasible due to implementation issues, compared to the results in \cite{R}, our findings suggest that DCBO may slightly outperform the memory effect or memory drift-based CBO model.

\subsection{Compressed sensing}
Compressed sensing problem concerns retrieving a $d$-dimensional signal $x$ from fewer measurements $b \in \bbr^m ~ (m < d) $, assuming that the signal is sparse. Let the number of non-zero elements in $x$ be $s$. The signal and measurements are related by $b=Ax$, where $A \in \mathbb{R}^{m \times d}$ is a sensing matrix. Our goal is to find $x$ given $A$ and $b$. This problem can be approached by solving
\begin{align}\label{CS}
	\min_{x \in \mathbb{R}^d} \frac{1}{2}\|Ax-b\|^2_2 \quad \text{subject to} \quad \|x\|_p \leq r,
\end{align}
for a suitable $p \in [0,1]$, where $\|x\|_p :=[(x^1)^p+\ldots+(x^d)^p]^{1/p}$ is the $\ell^p$ (quasi-)norm \cite{BR, WLX}. While $p=1$ is a typical choice, $p=0.5$ has been shown to outperform $p=1$ \cite{ORS}. We therefore address problem \eqref{CS} with $p=0.5$ for varying values of $r$. To apply DCBO, we define the set $S_r = \{ x \in \mathbb{R}^d \mid \|x\|_{0.5} \leq r \}$. Given that $S_r$ is \emph{not convex}, the projection onto $S_r$ is not unique. Thus we modify the objective function $f_r$ to
\[
f_r(x) :=
\begin{cases}
	\frac{1}{2}\|Ax-b\|^2_2 \quad &\text{if} \quad x \in S_r, \\
	+\infty \quad &\text{otherwise}.
\end{cases}
\]

For comparison, we also solve \eqref{CS} using the projected gradient descent (PGD) method \cite{WLX}. We compared PGD and DCBO for $N=200$ and $2000$ using three fixed sparse signals $x_2$, $x_4$, and $x_6$ in $\bbr^{100}$ where $x_s$ is a $s$-sparse vector only $s$ nonzero entries. For each $x_s$, we solved \eqref{CS} using for $r \in \{6,12,18,24,30\}$. In each case, we conducted 100 experiments using a random sensing matrix $A$, where each entry of $A$ is i.i.d. standard normal. The $\| \cdot \|_{0.5}$ norms of each $x_s$ were $\|x_2\|_{0.5} = 3.4655$, $\|x_4\|_{0.5} = 12.9132$ and $\|x_6\|_{0.5} = 21.3583$, and we used $m=40$ measurements. We used $(\gamma^1, \gamma^2, \bar{\gamma}^1, \bar{\gamma}^2) = (0.5,1,0.4,0.7)$ for DCBO. For the PGD implementation, we modified the code from \cite{WLX}.\footnote{The code is available at https://github.com/won-j/LpBallProjection.} To post-process the DCBO and PGD solutions, we applied hard-thresholding to small entries $(<0.01)$ and minimized $\|Ax-b\|_2$ constrained to the support obtained by thresholding. For each $s$ and $r$, we computed the true positive rate (TPR) and false positive rate (FPR).

\begin{table}[h]
\scriptsize
    \begin{tabular}{l|p{1.77cm}|p{1.77cm}|p{1.77cm}|p{1.77cm}|p{1.77cm}}
	\toprule
       \multicolumn{6}{c}{\textbf{PGD}}   \\
        \midrule
        Radius $(r)$ & 6 & 12 & 18 & 24 & 30  \\
        \midrule
        s=2 (TPR) & 0.9200(0.0222) & 0.2950(0.0334) & 0.0850(0.0202) & 0.0300(0.0119) & 0.0200(0.0098)  \\
        s=2 (FPR) & 0.0051(0.0010) & 0.0313(0.0013) & 0.0350(0.0010) & 0.0363(0.0009) & 0.0365(0.0008)  \\
        \midrule
        s=4 (TPR) & 0.5000(0.0036) & 0.8950(0.0225) & 0.3750(0.0317) & 0.0850(0.0164) & 0.0475(0.0122)  \\
        s=4 (FPR) & 0.0002(0.0002) & 0.0049(0.0010) & 0.0285(0.0010) & 0.0340(0.0010) & 0.0355(0.0011)  \\
        \midrule
        s=6 (TPR) & 0.3367(0.0058) & 0.5950(0.0163) & 0.4667(0.0331) & 0.1817(0.0260) & 0.0967(0.0142)  \\
        s=6 (FPR) & 0.0005(0.0003) & 0.0030(0.0008) & 0.0170(0.0015) & 0.0311(0.0010) & 0.0340(0.0011) \\ 
	\toprule
       \multicolumn{6}{c}{\textbf{DCBO $(N=200)$}}   \\
        \midrule
        Radius $(r)$ & 6 & 12 & 18 & 24 & 30  \\
        \midrule
        s=2 (TPR) & 0.8550(0.0278) & 0.9200(0.0184) & 0.9600(0.0169) & 0.9800(0.0121) & 0.9500(0.0167)  \\
        s=2 (FPR) & 0.0093(0.0017) & 0.0130(0.0027) & 0.0077(0.0023) & 0.0200(0.0043) & 0.0388(0.0059)  \\
        \midrule
        s=4 (TPR) & 0.4275(0.0152) & 0.6625(0.0278) & 0.8700(0.0218) & 0.8575(0.0217) & 0.8500(0.0259)  \\
        s=4 (FPR) & 0.0093(0.0012) & 0.0206(0.0017) & 0.0163(0.0023) & 0.0265(0.0033) & 0.0333(0.0040)  \\
        \midrule
        s=6 (TPR) & 0.2933(0.0138) & 0.4250(0.0171) & 0.5733(0.0219) & 0.7383(0.0259) & 0.7683(0.0227)  \\
        s=6 (FPR) & 0.0116(0.0014) & 0.0184(0.0015) & 0.0235(0.0019) & 0.0256(0.0023) & 0.0351(0.0030) \\ 
	\toprule
       \multicolumn{6}{c}{\textbf{DCBO $(N=2000)$}}   \\
        \midrule
        Radius $(r)$ & 6 & 12 & 18 & 24 & 30  \\
        \midrule
        s=2 (TPR) & 0.9350(0.0197) & 0.9750(0.0131) & 0.9700(0.0119) & 1.0000(0.0000) & 0.9900(0.0070)  \\ 
        s=2 (FPR) & 0.0040(0.0012) & 0.0029(0.0014) & 0.0055(0.0022) & 0.0030(0.0013) & 0.0100(0.0032)  \\ 
        \midrule
        s=4 (TPR) & 0.4950(0.0167) & 0.7925(0.0268) & 0.9050(0.0221) & 0.9400(0.0178) & 0.9750(0.0104)  \\ 
        s=4 (FPR) & 0.0055(0.0010) & 0.0130(0.0016) & 0.0095(0.0020) & 0.0074(0.0020) & 0.0060(0.0022)  \\ 
        \midrule
        s=6 (TPR) & 0.3383(0.0126) & 0.5050(0.0168) & 0.6683(0.0209) & 0.8467(0.0235) & 0.9100(0.0210)  \\ 
        s=6 (FPR) & 0.0073(0.0010) & 0.0119(0.0013) & 0.0140(0.0017) & 0.0145(0.0022) & 0.0116(0.0025) \\ 
        \bottomrule
    \end{tabular}
    \caption{Comparison between PGD and DCBO in the compressed sensing problem. Each entry represents mean and standard error. Results were averaged over 100 runs. We solved \eqref{CS} where for the domain parameter $r \in \{6,12,18,24,30\}$.} 
    \label{compTable}
\end{table}

The results in Table \ref{compTable} demonstrate that DCBO exhibited robustness in the radius $r$, tending to successfully capture the zero element whenever $r$ is sufficiently large. In contrast, PGD appears to perform well only within a specific range of $r$. Both algorithms performed poorly when $r$ was too small relative to $\|x_s\|_{0.5}$. Given that neither $s$ nor optimal domain parameter $r$ are known a priori to the algorithms, DCBO offers an advantage of solving \eqref{CS} with minimal dependence on $r$.

\section{Conclusion} \label{sec:6}
This paper introduces the Discrete Consensus-Based Optimization (DCBO) method as a novel multi-agent approach to global optimization. Existing research on discretized CBO has focused on the almost sure consensus or error estimates for the best event under \emph{homogeneous} noise. In our view, this limitation arises from the absence of appropriate tools for discrete time, which complicates the analysis of time-discrete CBO models. Our proposal of replacing the “softmin” action with “hardmin”, which puts an agent with the best function value as the consensus point, liberates the mathematical analysis previously deemed unfeasible, enabling us to extract more comprehensive theoretical results than earlier studies. Further research is warranted, for example, on the direct relationship between the number of iterations and the progression of optimization.




\bibliographystyle{siamplain}

\end{document}